\documentclass[11pt]{amsart}

\usepackage[margin=1.1in]{geometry}
\usepackage{amsmath,amssymb,amsfonts,amsthm,mathtools}
\usepackage{mathrsfs}
\usepackage{enumitem}
\usepackage[colorlinks=true,citecolor=blue,linkcolor=blue,urlcolor=blue]{hyperref}

\newtheorem{theorem}{Theorem}[section]
\newtheorem{lemma}[theorem]{Lemma}
\newtheorem{proposition}[theorem]{Proposition}
\newtheorem{corollary}[theorem]{Corollary}
\newtheorem{conjecture}[theorem]{Conjecture}
\theoremstyle{remark}

\DeclareMathOperator{\tr}{tr}
\DeclareMathOperator{\spanop}{span}
\newcommand{\HH}{\mathbb H}
\newcommand{\RR}{\mathbb R}
\newcommand{\Sn}{\mathbb S}
\newcommand{\Id}{I}
\newcommand{\mcM}{\mathsf M}
\newcommand{\mcK}{\mathsf K}

\title[Reciprocal sums]{Reciprocal sums of Neumann eigenvalues in non-Euclidean space forms}
\author{Jiangcheng You and Heng Zhang}
\address{School of Mathematical Sciences, University of Science and Technology of China, Hefei, China}
\email{yjcmp@mail.ustc.edu.cn}

\address{School of Mathematical Sciences, University of Science and Technology of China, Hefei, China}
\email{hengz@mail.ustc.edu.cn}
\date{\today}

\begin{document}

\begin{abstract}
Let $M^n_\kappa$ be the simply connected space form of dimension $n\ge2$ and constant sectional curvature $\kappa\in\{-1,1\}$.  For every bounded connected smooth domain $\Omega\subset M^n_\kappa$, assume in the case $\kappa=1$ that $\Omega$ is contained in an open hemisphere, and let $B_\Omega$ be a geodesic ball with $|B_\Omega|=|\Omega|$.  We prove
$$
        \sum_{j=1}^n \frac1{\mu_j(\Omega)}\ge \frac{n}{\mu_1(B_\Omega)},
$$
where $\mu_j(\Omega)$ are the positive Neumann eigenvalues of $\Omega$.  Equality holds if and only if $\Omega$ is a geodesic ball.  This proves a conjecture proposed by Xia and Wang [Math. Ann. 385, 2023, 863-879].
\end{abstract}

\maketitle

\section{Introduction}
\subsection{Background and main result}
Let $\Omega$ be a bounded connected domain with smooth boundary in a Riemannian manifold.  We use the sign convention
$$
        \Delta u=-\mu u,
        \qquad
        \partial_\nu u=0 \quad \text{on }\partial\Omega,
$$
and write the Neumann spectrum as
$$
        0=\mu_0(\Omega)<\mu_1(\Omega)\le \mu_2(\Omega)\le\cdots .
$$

The isoperimetric theory of Neumann eigenvalues begins with the Szeg\H{o}--Weinberger inequality.  Szeg\H{o} proved in dimension two that, among simply connected planar domains of prescribed area, the disk maximizes the first nonzero Neumann eigenvalue \cite{Szego}.  Weinberger removed the planar and conformal restrictions and proved the $n$-dimensional Euclidean inequality
$$
        \mu_1(\Omega)
        \le \left(\frac{\omega_n}{|\Omega|}\right)^{2/n}p_{n/2,1}^2,
        \qquad \Omega\subset\RR^n,
$$
with equality if and only if $\Omega$ is a ball \cite{Weinberger}.  Here $\omega_n$ denotes the volume of the Euclidean unit ball and $p_{n/2,1}$ is the first positive zero of the derivative of $x^{1-n/2}J_{n/2}(x)$.

Ashbaugh and Benguria then studied reciprocal sums of low Neumann eigenvalues.  They proved the scale-sharp but non-optimal estimate
$$
        \sum_{j=1}^n\frac1{\mu_j(\Omega)}
        \ge \frac{n}{n+2}\left(\frac{|\Omega|}{\omega_n}\right)^{2/n},
        \qquad \Omega\subset\RR^n,
$$
and formulated the following sharp conjecture \cite{AB1993}.

\begin{conjecture}[Ashbaugh--Benguria]\label{conj:AB-euclidean}
For any bounded domain $\Omega\subset\RR^n$ with smooth boundary,
\begin{equation*}
        \sum_{j=1}^n\frac1{\mu_j(\Omega)}
        \ge
        \frac{n(|\Omega|/\omega_n)^{2/n}}{p_{n/2,1}^2}.
\end{equation*}
Equality holds if and only if $\Omega$ is a ball in $\RR^n$.
\end{conjecture}

Conjecture \ref{conj:AB-euclidean} was subsequently highlighted by Ashbaugh and by Henrot \cite{AshbaughOpen,Henrot}.  Related reciprocal-eigenvalue estimates and quantitative refinements include work of Hile--Xu, Nadirashvili, and Brasco--Pratelli \cite{HileXu,Nadirashvili,BrascoPratelli}.  A sharp partial result toward Conjecture \ref{conj:AB-euclidean} was obtained by Xia and Wang, who proved the $(n-1)$-term inequality
$$
        \sum_{j=1}^{n-1}\frac1{\mu_j(\Omega)}
        \ge
        \frac{(n-1)(|\Omega|/\omega_n)^{2/n}}{p_{n/2,1}^2},
        \qquad \Omega\subset\RR^n,
$$
with equality only for balls \cite{XiaWang}.  Recently, He, Li and Tang posted a proof of Conjecture \ref{conj:AB-euclidean}; a central feature of their argument is to retain the transplanted first eigenspace as one coupled finite-dimensional trial space rather than to estimate its coordinate functions separately \cite{HeLiTang}.

The Szeg\H{o}--Weinberger inequality for $\mu_1$ has also been studied in spaces of constant curvature.  In the hyperbolic case, it was proved independently by Ashbaugh--Benguria and by Xu; Chavel observed that Weinberger's method can be adapted to this setting \cite{AB1995,Xu,Chavel}.  Ashbaugh--Benguria also proved the corresponding sharp upper bound for domains contained in a hemisphere of the sphere \cite{AB1995}.  Motivated by these extensions, Xia and Wang formulated the following  non-Euclidean space-form analogue of Conjecture \ref{conj:AB-euclidean} for constant curvature $\kappa\in\{-1,1\}$ \cite{XiaWang}.

\begin{conjecture}[Xia--Wang]\label{conj:XW-spaceform}
Let $M^n_\kappa$ be the simply connected $n$-dimensional Riemannian manifold of constant sectional curvature $\kappa\in\{-1,1\}$, and let $\Omega\subset M^n_\kappa$ be a bounded connected domain with smooth boundary.  In the case $\kappa=1$, assume that $\Omega$ is contained in an open hemisphere.  Let $B_\Omega\subset M^n_\kappa$ be a geodesic ball such that $|B_\Omega|=|\Omega|$.  Then
\begin{equation}\label{eq:XW-spaceform-conj}
        \sum_{j=1}^n\frac1{\mu_j(\Omega)}
        \ge
        \frac{n}{\mu_1(B_\Omega)}.
\end{equation}
Equality holds if and only if $\Omega$ is isometric to $B_\Omega$.
\end{conjecture}

Xia and Wang proved the corresponding sharp $(n-1)$-term inequalities in the Euclidean and hyperbolic cases; in particular, for $\Omega\subset\HH^n$ they obtained
$$
        \sum_{j=1}^{n-1}\frac1{\mu_j(\Omega)}
        \ge
        \frac{n-1}{\mu_1(B_\Omega)},
$$
with equality only for geodesic balls \cite{XiaWang}.  In positive curvature, Benguria, Brandolini and Chiacchio proved the corresponding harmonic-mean inequality for the first $n-1$ nontrivial Neumann eigenvalues on domains contained in a hemisphere of $\Sn^n$ \cite{BBC}.  Related lower-order estimates have also been developed in rank-one symmetric spaces \cite{MengWang}.

The main result of this paper proves the conjecture of Xia--Wang.

\begin{theorem}\label{thm:main}
Conjecture \ref{conj:XW-spaceform} holds true.  
\end{theorem}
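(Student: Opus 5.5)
We follow the Weinberger–Chavel–Ashbaugh–Benguria transplantation scheme, combined with the coupled trial-space idea of He--Li--Tang. Work in the model $M^n_\kappa$ written in geodesic polar coordinates $(r,\theta)$ about a point $p$. Let $g(r)$ be the radial profile of the first nontrivial Neumann eigenfunctions of $B_\Omega$ (radius $R$): $g(r)\theta_i$, $i=1,\dots,n$, span the first eigenspace, with $g(0)=0$ and $g'(R)=0$. Extend $g$ by $g(r)=g(R)$ for $r\ge R$, and set $P_i=g(r)\theta_i$. By the Brouwer fixed point argument (using the hemisphere assumption when $\kappa=1$), choose $p$ so that $\int_\Omega P_i=0$ for all $i$. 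Every function in $V=\spanop\{P_1,\dots,P_n\}$ is then orthogonal to constants.

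Instead of testing each $P_i$ separately against $\mu_1$, we use the min–max characterization for reciprocal sums. For any $n$-dimensional space $V\perp 1$ with $L^2$-Gram matrix $A$ and Dirichlet-energy matrix $D$,
$$\sum_{j=1}^n\frac1{\mu_j(\Omega)}\ \ge\ \tr(D^{-1}A),$$
which is the classical Poincaré-type inequality on traces (as in Hile--Xu and He--Li--Tang). Rotate the frame $\theta_i$ by an orthogonal matrix. This leaves $\int P_i=0$ intact and lets us diagonalize $A$ or $D$. We then need
$$\tr(D^{-1}A)\ \ge\ \frac{n}{\mu_1(B_\Omega)}.$$
Compute
$$\tr A=\int_\Omega g^2,\qquad \tr D=\int_\Omega\Bigl(g'^2+\frac{(n-1)g^2}{\mathrm{sn}_\kappa^2(r)}\Bigr).$$
The individual entries, however, are $\int g^2\theta_i^2$ and $\int\bigl(g'^2\theta_i^2+\frac{g^2}{\mathrm{sn}_\kappa^2}|\nabla_{\Sn}\theta_i|^2\bigr)$. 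The last term weights the directions by $1-\theta_i^2$, so the diagonal of $D$ is anisotropic.

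The key step is a convexity argument. Since $X\mapsto\tr(X^{-1})$ is convex, $\tr(D^{-1}A)$ with $A$ diagonal is bounded below by a harmonic-mean type expression
$$\sum_i \frac{a_i^2}{\,\cdot\,},$$
where the numerators are the diagonal entries $a_i$ of $A$. We rewrite each $D_{ii}$ as
$$D_{ii}=\int\Bigl(g'^2-\frac{g^2}{\mathrm{sn}_\kappa^2}\Bigr)\theta_i^2+\int\frac{g^2}{\mathrm{sn}_\kappa^2}.$$
The second term is common to all directions. The first involves $h(r):=g'^2-g^2/\mathrm{sn}_\kappa^2$, which is $\le 0$ for $r<R$ and equals $-g(R)^2/\mathrm{sn}_\kappa^2(r)$ beyond. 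By Cauchy–Schwarz, weighted by $A$, it then suffices to prove the scalar inequality
$$\mu_1(B_\Omega)\sum_i a_i\ \ge\ \sum_i D_{ii},$$
that is,
$$\int_\Omega\Bigl(g'^2+\frac{(n-1)g^2}{\mathrm{sn}_\kappa^2}-\mu_1(B)g^2\Bigr)\le 0,$$
together with control of the cross term $\sum_i a_i D_{ii}$ versus $(\sum a_i)(\sum D_{ii})/n$. The scalar inequality is the Ashbaugh–Benguria/Chavel mass-transplantation step. The function
$$F(r)=g'^2+(n-1)g^2/\mathrm{sn}_\kappa^2-\mu_1 g^2$$
is decreasing in $r$ (this uses $\kappa=-1$ directly, and the hemisphere condition $R<\pi/2$ for $\kappa=1$). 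Its integral over $B_\Omega$ vanishes, so moving mass from $\Omega\setminus B$ to $B\setminus\Omega$ gives $\le0$.

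The main obstacle is the cross term. The weights $\theta_i^2$ make $a_i$ and $D_{ii}$ positively correlated in a way that can spoil the bound. The first attempt is to choose the rotation so that $A$ is diagonal and then exploit the sign $h\le0$: directions with large $a_i$ have comparatively smaller $D_{ii}/a_i$. One then proves the Chebyshev-type inequality
$$\sum_i\frac{a_i^2}{D_{ii}}\ \ge\ \frac{(\sum a_i)^2}{\sum D_{ii}}$$
directly from the structure
$$D_{ii}=c+\int h\,\theta_i^2,$$
with $c=\int g^2/\mathrm{sn}_\kappa^2$ common to all $i$. The hope is that $\int h\theta_i^2/\int g^2\theta_i^2$ is monotone in $a_i$, using a pointwise comparison $-h\le \lambda g^2$ with the constant $\lambda$ chosen optimally. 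If this fails, the fallback is the He--Li--Tang device of optimizing over the full matrix of $V$ rather than its diagonal.

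For the equality case, equality in the transplantation step forces $|\Omega\triangle B|=0$, since $F$ is strictly decreasing. Hence $\Omega$ is a geodesic ball, isometric to $B_\Omega$.
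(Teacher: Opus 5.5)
\textbf{There is a genuine gap: the key step is left as a hope, and the reduction you propose for it fails.}

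Your reduction to $\tr(D^{-1}A)\ge n/\mu_1(B_\Omega)$ via the trace Ritz principle is correct. The argument you sketch for that bound does not close, and the step you leave as a ``hope'' is the whole difficulty.

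\emph{The written inequality bounds the wrong quantity.} With $A$ diagonal, the correct elementary bound is $\tr(D^{-1}A)\ge\sum_i a_i/D_{ii}$, since $(D^{-1})_{ii}\ge 1/D_{ii}$. The inequality you display, $\sum_i a_i^2/D_{ii}\ge(\sum_i a_i)^2/\sum_i D_{ii}$, is plain Cauchy--Schwarz and holds for any positive numbers, but it is not a lower bound for $\tr(D^{-1}A)$.

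\emph{The cross-term control you need can fail.} Passing from $\sum_i a_i/D_{ii}$ to $n\sum_i a_i/\sum_i D_{ii}$ requires the Chebyshev condition $\sum_i a_iD_{ii}\le\frac1n\sum_i a_i\sum_i D_{ii}$, i.e.\ that $a_i$ and $D_{ii}$ be oppositely ordered. The transplantation gives no such information. For example, start from the centered ball. In narrow cones about $\pm e_1$, carve a thin cavity of thickness $\delta$ just inside radius $R$ and add a bump of equal ray-volume just outside. Every ray keeps its volume, and by symmetry $p$ is still a valid center. Because $g'(R)=0$, the gain in $\int g^2\theta_1^2$ and the loss in $\int g'^2\theta_1^2$ are only $O(\delta^3)$. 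The angular energy $\int g^2\mathrm{sn}^{-2}(1-\theta_j^2)$, $j\ne1$, drops at order $\delta^2$. Hence $a_1>a_j$ and also $D_{11}>D_{jj}$, so the two are similarly ordered and the condition fails.

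\emph{Trace information is not enough.} Your scalar inequality $\int_\Omega\bigl(g'^2+(n-1)g^2/\mathrm{sn}^2-\mu_1g^2\bigr)\le0$ is Weinberger's argument for $\mu_1$ alone. It is only the trace of what is needed, and trace information cannot bound $\tr(D^{-1}A)$ from below.

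\textbf{The missing idea} is to carry the directional information through the rearrangement and then discard the direction-dependent slack. The paper does this ray by ray. Write $w=\mathrm{sn}^{n-1}$, let $Y(\xi)$ be the $w$-volume of the ray set $E_\xi$, and let $Y_R$ be the $w$-volume of $(0,R)$.
\begin{itemize}
\item Rearrange $g^2$ (nondecreasing) and $g^2/\mathrm{sn}^2$ (nonincreasing, which needs its own lemma) separately on each ray.
\item Linearize at $Y_R$, using that $y\mapsto\int_0^{\tau(y)}g^2w$ is convex and $y\mapsto\int_0^{\tau(y)}g^2\mathrm{sn}^{-2}w$ is concave in the volume variable.
\item Integrate against $\xi\otimes\xi$ and $\Id-\xi\otimes\xi$. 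This gives $A\succeq a\Id+cZ$ and $D\preceq\lambda a\Id-\eta Z$, where $\lambda=\mu_1(B_\Omega)$, $a,c,\eta>0$ are ball constants, and one trace-free matrix $Z=\int_{\Sn^{n-1}}(Y(\xi)-Y_R)\,\xi\otimes\xi\,d\sigma$ appears with opposite signs.
\item Operator monotonicity gives $\tr(D^{-1}A)\ge\tr\bigl(T^{-1}(a\Id+cZ)\bigr)$ with $T=\lambda a\Id-\eta Z$. This replaces the true matrices, whose slack is uncontrolled, by comparison matrices with a common eigenbasis.
\item The bound becomes $\sum_i(a+cz_i)/(\lambda a-\eta z_i)\ge n/\lambda$. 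This is Jensen's inequality for the convex function $z\mapsto z/(\lambda a-\eta z)$, using $\sum_iz_i=0$.
\end{itemize}

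\textbf{There is a second gap on the sphere.} The center $p$ need not be the pole of the hemisphere containing $\Omega$. So $\Omega$ may contain points with $d(p,x)>\pi/2$, or even $>\pi-R$. There $g(R)^2/\sin^2r$ increases past its value at $r=R$, and the ray comparison for the angular term fails. The condition on $R$ does not address this, and $R=\pi/2$ must in any case be allowed. The paper fixes this with the even profile $F_+(\min\{t,\pi-t\})$. This makes the $P_i$ antipodally odd and the mass and stiffness densities even, so $\Omega$ can be folded injectively into $B_{\pi/2}(p)$ without changing $A$ and $D$.

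Your equality argument rests on the unproved chain and inherits both gaps.
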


% \begin{remark}\label{rem:connectedness}
% The connectedness assumption is essential.  If disconnected bounded open sets are allowed and the symbols $\mu_j$ denote the positive Neumann eigenvalues after removing the whole zero eigenspace, the inequality fails.  Indeed, fix a hyperbolic ball $B_R$ and let $\Omega_m$ be the disjoint union of $m$ geodesic balls of equal volume $|B_R|/m$.  If $r_m$ is their radius, then the first $n$ positive Neumann eigenvalues of $\Omega_m$ are all equal to $\mu_1(B_{r_m})$, while the comparison ball for $\Omega_m$ is $B_R$.  Since $r_m\to0$ and $\mu_1(B_{r_m})\to\infty$, for $m$ large
% $$
%         \sum_{j=1}^n \frac1{\mu_j(\Omega_m)}
%         =
%         \frac{n}{\mu_1(B_{r_m})}
%         <
%         \frac{n}{\mu_1(B_R)}.
% $$
% Thus no disconnected version of the theorem is possible under this convention.
% \end{remark}

\subsection{Proof strategy}
The proof is a finite-dimensional, matrix-valued refinement of
Weinberger's transplantation method.  In the classical coordinatewise
Weinberger argument, and in the known sharp $(n-1)$-term estimates, the
transplanted coordinate functions are tested essentially one at a time.  The
key point here is instead to keep the whole transplanted first eigenspace as
a single coupled $n$-dimensional trial space, and to estimate its mass and
stiffness forms simultaneously in the Loewner order.  This is structurally
analogous to the recent Euclidean proof of He--Li--Tang \cite{HeLiTang}.  The
present paper develops the corresponding argument in space forms; the main
new difficulties are the curvature-dependent radial monotonicity estimates
and, in the spherical case, the need to fold antipodally before applying
raywise rearrangement.

More precisely, we first choose a Weinberger center and transplant the
$n$-dimensional first nonzero Neumann eigenspace of the comparison ball
$B_\Omega$.  This gives trial functions $P_1,\dots,P_n$ with zero mean on
$\Omega$.  Let $\mcM$ and $\mcK$ be their mass and stiffness matrices, as
defined in \eqref{eq:hyperbolic-MK-def}.  The trace form of the Ritz principle
(Lemma \ref{lem:trace-ritz}) gives
$$
        \sum_{j=1}^n\frac1{\mu_j(\Omega)}
        \ge \tr(\mcK^{-1}\mcM).
$$
Thus the problem is reduced to obtaining a sharp lower bound for
$\tr(\mcK^{-1}\mcM)$.

The required bound comes from raywise rearrangement in geodesic polar
coordinates about the Weinberger center.  On each ray, the radial
monotonicity properties of the transplanted profile compare the contribution
of $\Omega$ with that of the centered ball of the same volume.  After
integrating over the angular variable, these one-dimensional comparisons
become the matrix inequalities
$$
        \mcM\succeq a\Id+cZ,
        \qquad
        \mcK\preceq \lambda a\Id-\eta Z,
        \qquad
        \tr Z=0,
$$
where $\lambda=\mu_1(B_\Omega)$, the constants $a,c,\eta$ are positive, and
$Z$ is a symmetric trace-free matrix measuring the angular imbalance of
$\Omega$ relative to the chosen center.  The important feature is that the
same imbalance matrix appears in the two estimates with opposite signs.
Lemma \ref{lem:matrix} then converts these Loewner-order bounds directly into
$$
        \tr(\mcK^{-1}\mcM)\ge \frac{n}{\lambda},
$$
which yields the desired reciprocal-sum inequality.

In the spherical case an additional geometric issue has to be addressed.
Although $\Omega$ is assumed to lie in some open hemisphere, the Weinberger
center produced by the centering argument need not be the center of that
hemisphere.  To remove this obstruction, we use a radial profile, with
respect to the chosen center $p$, that is even under $t\mapsto \pi-t$.  The
corresponding trial functions are odd under the antipodal map, while their
mass and stiffness densities are antipodally invariant.  Hence the part of
$\Omega$ lying beyond the equator of $p$ may be folded back into
$B_{\pi/2}(p)$ without changing the matrices $\mcM$ and $\mcK$.  After this
folding, the entire rearrangement argument takes place in the half-sphere
where $\sin t$ is increasing, so that the same matrix mechanism as in the
hyperbolic case applies.

\subsection{Organization of the paper}
Necessary preliminary tools are assembled in Section \ref{sec2}. Section \ref{sec3} deals with the hyperbolic case, and Section \ref{sec4} treats the spherical case.

\section{Common preliminaries}\label{sec2}

We write
$$
        B_R(p)=\{x:d(p,x)<R\}
$$
for the geodesic ball of radius $R$ centered at $p$.  When the center is irrelevant, $B_R$ denotes an arbitrary geodesic ball of radius $R$.  For $\xi\in\RR^n$, the notation $\xi\otimes\xi$ denotes the rank-one matrix $(\xi_i\xi_j)_{i,j=1}^n$.

\subsection{Trace Ritz principle}

Let
$$
        H^1_{\perp}(\Omega)=\left\{u\in H^1(\Omega):\int_\Omega u\,dv=0\right\}.
$$
The positive Neumann spectrum is the spectrum of the Dirichlet form $\int_\Omega |\nabla u|^2dv$ on $H^1_{\perp}(\Omega)$ with mass form $\int_\Omega u^2dv$.

Let $\Omega$ be a bounded connected smooth domain, and let $P_1,\dots,P_n\in H^1_{\perp}(\Omega)$ be linearly independent.  Define the mass matrix $\mcM=(\mcM_{ij})$ and stiffness matrix $\mcK=(\mcK_{ij})$ as
\begin{equation}\label{eq:hyperbolic-MK-def}
        \mcM_{ij}=\int_\Omega P_iP_j\,dv,
        \qquad
        \mcK_{ij}=\int_\Omega \langle\nabla P_i,\nabla P_j\rangle\,dv.
\end{equation}

\begin{lemma}\label{lem:trace-ritz}
$\mcM$ and $\mcK$ are positive definite and
\begin{equation}\label{eq:trace-ritz}
        \sum_{j=1}^n\frac1{\mu_j(\Omega)}
        \ge \tr(\mcK^{-1}\mcM).
\end{equation}
\end{lemma}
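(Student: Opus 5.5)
Positive definiteness comes first. $\mcM$ is the Gram matrix of $P_1,\dots,P_n$ in $L^2(\Omega)$, so for $\xi\in\RR^n$ we have $\xi^T\mcM\xi=\int_\Omega(\sum_i\xi_iP_i)^2dv$. This vanishes only when $\sum\xi_iP_i=0$, and linear independence then forces $\xi=0$. Similarly, $\xi^T\mcK\xi=\int_\Omega|\nabla u|^2dv$ with $u=\sum\xi_iP_i$. If this vanishes, $u$ is constant on $\Omega$ because $\Omega$ is connected. Since $u$ has zero mean, $u=0$, and hence $\xi=0$.

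For the inequality, I will reduce to a generalized eigenvalue problem on the trial space $V=\spanop\{P_1,\dots,P_n\}\subset H^1_\perp(\Omega)$. The pencil $\mcK\xi=\theta\mcM\xi$ has eigenvalues $0<\theta_1\le\dots\le\theta_n$, and these are exactly the eigenvalues of $\mcM^{-1}\mcK$, which is similar to the symmetric positive matrix $\mcM^{-1/2}\mcK\mcM^{-1/2}$. Then
$$\tr(\mcK^{-1}\mcM)=\tr\bigl((\mcM^{-1/2}\mcK\mcM^{-1/2})^{-1}\bigr)=\sum_{j=1}^n\frac1{\theta_j}.$$
The $\theta_j$ are the Ritz values of the Rayleigh quotient $R(u)=\int|\nabla u|^2/\int u^2$ restricted to $V$. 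By Courant--Fischer,
$$\theta_j=\min_{W\subset V,\ \dim W=j}\ \max_{u\in W\setminus\{0\}}R(u).$$
Every such $W$ is also a $j$-dimensional subspace of $H^1_\perp(\Omega)$. By the min-max characterization of the positive Neumann spectrum as the spectrum of the Dirichlet form on $H^1_\perp(\Omega)$, this gives $\mu_j(\Omega)\le\theta_j$ for each $j=1,\dots,n$. Taking reciprocals and summing yields \eqref{eq:trace-ritz}.

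There is no serious obstacle here. The only points needing care are the use of connectedness to exclude nonzero constants in the kernel of $\mcK$, and stating correctly that the min-max characterization on $H^1_\perp(\Omega)$ produces exactly $\mu_1(\Omega)\le\mu_2(\Omega)\le\cdots$. The latter holds because the zero mode consists of the constants, which are $L^2$-orthogonal to $H^1_\perp(\Omega)$.
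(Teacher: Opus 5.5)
Your proposal is correct and follows essentially the same route as the paper. Both arguments get positive definiteness from linear independence together with connectedness and the zero-mean condition. Both then compare the Ritz values of the pencil $(\mcK,\mcM)$ with $\mu_j(\Omega)$ via min-max on $H^1_\perp(\Omega)$ and identify $\sum_j 1/\theta_j$ with $\tr(\mcK^{-1}\mcM)$. The only difference is that you invert $\mcM^{-1/2}\mcK\mcM^{-1/2}$ where the paper uses $\mcK^{-1/2}\mcM\mcK^{-1/2}$, which is immaterial.
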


\begin{proof}
Let
$$
        V=\spanop\{P_1,\dots,P_n\}\subset H^1_{\perp}(\Omega).
$$
Positive definiteness of $\mcM$ follows from linear independence.  If a nonzero $u\in V$ had zero stiffness, then $\nabla u=0$ a.e.  Since $\Omega$ is connected, $u$ would be constant.  Because $u\in H^1_{\perp}(\Omega)$, this constant would be zero, contradicting linear independence.  Hence $\mcK$ is positive definite.

The Ritz values $\rho_1\le\cdots\le\rho_n$ of the pair $(\mcK,\mcM)$ are characterized by
$
        \mcK v=\rho\,\mcM v 
$.  By the min-max principle,
$$
        \mu_j(\Omega)\le \rho_j,
        \qquad j=1,\dots,n.
$$
Consequently
$$
        \sum_{j=1}^n\frac1{\mu_j(\Omega)}
        \ge \sum_{j=1}^n\frac1{\rho_j}.
$$
The reciprocal Ritz values are the eigenvalues of $\mcK^{-1/2}\mcM\mcK^{-1/2}$; hence their sum is $\tr(\mcK^{-1}\mcM)$.
\end{proof}

\subsection{One-dimensional rearrangement on rays}

Let $L\in(0,\infty]$, let $w:(0,L)\to(0,\infty)$ be locally integrable, and put
$$
        \mathcal V(t)=\int_0^t w(\tau)\,d\tau.
$$
When $L=\infty$, $\mathcal V(L)$ means $\lim_{t\to\infty}\mathcal V(t)$.  For $y\in[0,\mathcal V(L))$, let $\tau(y)$ be the inverse function of $\mathcal V$. We shall use the following elementary one-dimensional rearrangement fact.

\begin{lemma}\label{lem:ray-rearrangement}
Let $E\subset(0,L)$ be measurable and let
$$
        y=\int_Ew(t)\,dt<\mathcal V(L).
$$
Let $\psi:(0,L)\to\RR$ be measurable and monotone.    If $\psi$ is nondecreasing, then
$$
        \int_E\psi(t)w(t)\,dt
        \ge \int_0^{\tau(y)}\psi(t)w(t)\,dt.
$$
If $\psi$ is nonincreasing, then
$$
        \int_E\psi(t)w(t)\,dt
        \le \int_0^{\tau(y)}\psi(t)w(t)\,dt.
$$
The same conclusions hold when $y=\mathcal V(L)<\infty$, with $\tau(y)=L$.
\end{lemma}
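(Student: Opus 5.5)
The plan is the standard bathtub (layer-cake) argument, written in the weighted measure $d\sigma=w(t)\,dt$ on $(0,L)$. I treat the nondecreasing case; the nonincreasing case then follows by replacing $\psi$ with $-\psi$. Put $T=\tau(y)$ and $I=(0,T)$. Since $w>0$, the function $\mathcal V$ is continuous and strictly increasing, so $\tau$ is well defined and
$$
        \int_I w\,dt=\mathcal V(T)=y=\int_E w\,dt .
$$
The same identity holds in the endpoint case $y=\mathcal V(L)$ with $T=L$.

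The key steps are as follows. First, subtract the common part $E\cap I$ from both integrals. This leaves the comparison
$$
        \int_{E\setminus I}\psi w\,dt\;\ge\;\int_{I\setminus E}\psi w\,dt ,
$$
and by the mass identity the two sets carry equal weight:
$$
        \int_{E\setminus I}w\,dt=\int_{I\setminus E}w\,dt=:m .
$$
Second, use monotonicity. Every point of $E\setminus I$ lies in $[T,L)$ and every point of $I\setminus E$ lies in $(0,T)$. If $\psi$ is nondecreasing, then $\psi\ge \psi(T^-)$ on the first set and $\psi\le\psi(T^-)$ on the second, where $\psi(T^-)$ is the left limit (finite when $T<L$). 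Hence
$$
        \int_{E\setminus I}\psi w\,dt\;\ge\; \psi(T^-)\,m\;\ge\;\int_{I\setminus E}\psi w\,dt ,
$$
which is the claim. In the endpoint case $T=L$, the set $E\setminus I$ has measure zero, so $m=0$. The same weight identity then forces $I\setminus E$ to be null, and the two integrals coincide.

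The main obstacle is integrability. Neither $\int_E\psi w$ nor $\int_I\psi w$ need be finite; for instance, $w$ may blow up or $\psi$ may be unbounded near $0$ or near $L$. I would handle this in three ways.
\begin{enumerate}[label=(\roman*)]
\item Interpret the integrals in the extended sense. On $E\setminus I\subset[T,L)$ we have $\psi\ge\psi(T^-)$, so $\psi$ is bounded below there and $\int_{E\setminus I}\psi w$ exists in $(-\infty,+\infty]$. Likewise $\psi$ is bounded above on $I\setminus E$, so $\int_{I\setminus E}\psi w$ exists in $[-\infty,\infty)$. This covers the case where $m<\infty$.
\item Address the common part $E\cap I$ separately. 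The subtraction in the first step is only legitimate when $\int_{E\cap I}\psi w$ is finite.
\item Otherwise, truncate: replace $\psi$ by $\max(\min(\psi,N),-N)$, which is still monotone and bounded. Apply the bounded argument on $E\cap(\epsilon,L)$ with $y$ adjusted, and then pass to the limit by monotone convergence.
\end{enumerate}
In the paper's intended applications, $\psi$ and $w$ should be bounded on bounded domains, so I expect this to be a routine remark rather than a real difficulty.
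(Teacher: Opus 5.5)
Your proposal is correct and is essentially the paper's argument. The paper also splits off $E\cap(0,\tau(y))$ and uses the equal-weight identity for $E\setminus(0,\tau(y))$ and $(0,\tau(y))\setminus E$. It then compares both pieces with the threshold value $\psi(\tau(y))$ rather than your left limit $\psi(T^-)$, which works equally well. Your integrability remarks go beyond the paper, which tacitly assumes the integrals are finite, as they are in all its applications.
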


\begin{proof}
Since
$$
        \int_E w(t)\,dt
        =
        \int_0^{\tau(y)} w(t)\,dt
        =y,
$$
we have
$$
        \int_{E\setminus(0,\tau(y))}w(t)\,dt
        =
        \int_{(0,\tau(y))\setminus E}w(t)\,dt .
$$
If \(\psi\) is nondecreasing, then
$$
\begin{aligned}
&\int_E\psi(t)w(t)\,dt
-\int_0^{\tau(y)}\psi(t)w(t)\,dt  \\
&\quad =
\int_{E\setminus(0,\tau(y))}
        \bigl(\psi(t)-\psi(\tau(y))\bigr)w(t)\,dt
+
\int_{(0,\tau(y))\setminus E}
        \bigl(\psi(\tau(y))-\psi(t)\bigr)w(t)\,dt
\ge0,
\end{aligned}
$$
because \(t\ge\tau(y)\) on \(E\setminus(0,\tau(y))\) and
\(t\le\tau(y)\) on \((0,\tau(y))\setminus E\).  The nonincreasing case is obtained by reversing the
inequality in the same argument, and the endpoint cases \(y=0\) and
\(y=\mathcal V(L)<\infty\) are immediate.
\end{proof}

\subsection{A matrix lemma}

We adopt the Loewner order on the space of real symmetric matrices. For two such matrices \(X,Y\), we write \(X \succeq Y\) if \(X-Y\) is positive semidefinite, and \(X \preceq Y\) if \(Y-X\) is positive semidefinite.

\begin{lemma}\label{lem:matrix}
Let $\mcM$ and $\mcK$ be positive definite symmetric $n\times n$ matrices.  Suppose that $a,c,\eta,\lambda>0$ and that $Z$ is a symmetric trace-free matrix such that
\begin{equation}\label{eq:matrix-hyp}
        \mcM\succeq a\Id+cZ,
        \qquad
        \mcK\preceq \lambda a\Id-\eta Z.
\end{equation}
Then
\begin{equation}\label{eq:matrix-conclusion}
        \tr(\mcK^{-1}\mcM)\ge \frac{n}{\lambda}.
\end{equation}
Moreover, if equality holds in \eqref{eq:matrix-conclusion}, then
\begin{equation}\label{eq:matrix-equality-conclusion}
        Z=0,
        \qquad
        \mcM=a\Id,
        \qquad
        \mcK=\lambda a\Id.
\end{equation}
\end{lemma}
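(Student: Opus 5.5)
Set $N=\lambda a\Id-\eta Z$. Since $0\prec\mcK\preceq N$, the matrix $N$ is positive definite, and operator monotonicity of inversion gives $\mcK^{-1}\succeq N^{-1}\succ0$. The plan is to bound $\tr(\mcK^{-1}\mcM)$ from below in two stages: first pass from $\mcK$ to $N$, then from $\mcM$ to $a\Id+cZ$. For the first stage, $\mcM^{1/2}(\mcK^{-1}-N^{-1})\mcM^{1/2}\succeq0$, so
$$\tr(\mcK^{-1}\mcM)\ge \tr(N^{-1}\mcM).$$
For the second stage, $N^{-1/2}(\mcM-a\Id-cZ)N^{-1/2}\succeq0$, so
$$\tr(N^{-1}\mcM)\ge \tr\bigl(N^{-1}(a\Id+cZ)\bigr).$$
The matrix $a\Id+cZ$ need not be positive definite, but this causes no trouble, because the trace of a PSD matrix is nonnegative either way.

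Next I would diagonalize. The matrices $N^{-1}$ and $a\Id+cZ$ commute, since both are functions of $Z$. Let $z_1,\dots,z_n$ be the eigenvalues of $Z$, so that $\sum z_i=0$ and $\lambda a-\eta z_i>0$ for each $i$. The bound becomes
$$\sum_i \frac{a+cz_i}{\lambda a-\eta z_i}.$$
Write $\frac{a+cz}{\lambda a-\eta z}=\frac1\lambda+g(z)$, where
$$g(z)=\frac{(c+\eta/\lambda)z}{\lambda a-\eta z}.$$
The function $z\mapsto z/(\lambda a-\eta z)$ is convex on $z<\lambda a/\eta$ and vanishes at $0$, with derivative $1/(\lambda a)$ there. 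By the tangent line inequality, $g(z)\ge \frac{(c+\eta/\lambda)}{\lambda a}z$. Summing over $i$ and using $\sum z_i=0$ gives $\sum_i g(z_i)\ge0$, hence $\tr(\mcK^{-1}\mcM)\ge n/\lambda$.

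A more explicit identity is $\frac{z}{\lambda a-\eta z}-\frac{z}{\lambda a}=\frac{\eta z^2}{\lambda a(\lambda a-\eta z)}$. This gives the sharpened bound
$$\tr(\mcK^{-1}\mcM)\ge \frac n\lambda+\frac{(c+\eta/\lambda)\eta}{\lambda a}\sum_i\frac{z_i^2}{\lambda a-\eta z_i}.$$
The added term is strictly positive unless every $z_i=0$.

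For the equality case, suppose equality holds in \eqref{eq:matrix-conclusion}. Then every inequality in the chain is an equality, and since $c,\eta>0$ the sharpened bound forces $Z=0$. The two trace steps then become equalities, each of the form $\tr(\text{PSD})=0$. From the second step, $N^{-1/2}(\mcM-a\Id)N^{-1/2}=0$, so $\mcM=a\Id$. From the first step, with $N=\lambda a\Id$ we get $\mcM^{1/2}(\mcK^{-1}-(\lambda a)^{-1}\Id)\mcM^{1/2}=0$. Since $\mcM$ is invertible, this gives $\mcK=\lambda a\Id$.

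The step needing the most care is the first trace comparison. It relies on operator monotonicity of $X\mapsto X^{-1}$ on positive definite matrices, which follows from the standard argument that $\Id\succeq A^{-1/2}BA^{-1/2}$ implies the reverse order for the inverses. The rest is bookkeeping.
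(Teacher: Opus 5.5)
Your proof is correct and follows the paper's argument essentially step for step. It uses the same two-stage trace comparison through $N=\lambda a\Id-\eta Z$, the same simultaneous diagonalization, the same splitting $\frac{n}{\lambda}+\frac{\lambda c+\eta}{\lambda}\sum_i\frac{z_i}{\lambda a-\eta z_i}$, and the same $\tr(\text{PSD})=0$ argument in the equality case. The one difference is that you use the tangent line at $0$, with the explicit remainder $\frac{\eta z^2}{\lambda a(\lambda a-\eta z)}$, where the paper invokes strict Jensen. These are equivalent here because the $z_i$ have mean zero, and your explicit remainder makes the conclusion $Z=0$ in the equality case immediate.
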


\begin{proof}
Set
$$
        T=\lambda a\Id-\eta Z.
$$
Since $T-\mcK\succeq0$ and $\mcK>0$, $T$ is positive definite.
We have
\begin{equation}\label{eq:matrix-chain}
        \tr(\mcK^{-1}\mcM)
        \ge \tr(T^{-1}\mcM)
        \ge \tr\bigl(T^{-1}(a\Id+cZ)\bigr).
\end{equation}
Indeed, if $X,Y\succeq0$, then $\tr(XY)=\tr(Y^{1/2}XY^{1/2})\ge0$.

Diagonalize $Z$.  Let $z_1,\dots,z_n$ be its eigenvalues.  Since $T=\lambda a\Id-\eta Z$ and $a\Id+cZ$ are affine functions of $Z$, they are diagonal in the same orthonormal basis, and so is $T^{-1}$.  
The positivity of $T$ gives $\lambda a-\eta z_i>0$ for every $i$.  Hence
\begin{align*}
        \tr\bigl(T^{-1}(a\Id+cZ)\bigr)
        &=\sum_{i=1}^n\frac{a+cz_i}{\lambda a-\eta z_i}\\
        &=\frac{n}{\lambda}
        +\frac{\lambda c+\eta}{\lambda}
        \sum_{i=1}^n\frac{z_i}{\lambda a-\eta z_i}.
\end{align*}
The function
$$
        \phi(z)=\frac{z}{\lambda a-\eta z}
$$
is strictly convex on $(-\infty,\lambda a/\eta)$, because
$$
        \phi''(z)=\frac{2\lambda a\eta}{(\lambda a-\eta z)^3}>0.
$$
Jensen's inequality yields
$$
        \sum_{i=1}^n\phi(z_i)
        \ge n\phi\left(\frac1n\sum_{i=1}^nz_i\right)=0.
$$
This proves \eqref{eq:matrix-conclusion}.

Assume now that equality holds in \eqref{eq:matrix-conclusion}.  Then equality must hold in every step of \eqref{eq:matrix-chain} and in Jensen's inequality.  Since $\phi$ is strictly convex, equality in Jensen's inequality forces $z_1=\cdots=z_n$.  Since $\sum_i z_i=0$, this gives $Z=0$.

With $Z=0$, we have $T=\lambda a\Id$.  Equality in the first inequality of \eqref{eq:matrix-chain} gives
$$
        \tr\bigl((\mcK^{-1}-T^{-1})\mcM\bigr)=0.
$$
Here $\mcK^{-1}-T^{-1}\succeq0$ and $\mcM>0$, so $\mcK^{-1}=T^{-1}$ and hence $\mcK=T=\lambda a\Id$.  Equality in the second inequality of \eqref{eq:matrix-chain} similarly gives
$$
        \tr\bigl(T^{-1}(\mcM-a\Id)\bigr)=0.
$$
Since $T^{-1}>0$ and $\mcM-a\Id\succeq0$, we get $\mcM=a\Id$.  This proves \eqref{eq:matrix-equality-conclusion}.
\end{proof}

\subsection{Almost-everywhere equality of smooth domains}

\begin{lemma}\label{lem:ae-domain-equality}
Let $D_1$ and $D_2$ be open domains with smooth boundary in a smooth Riemannian manifold.  If $D_1$ and $D_2$ agree up to a null set, then $D_1=D_2$.
\end{lemma}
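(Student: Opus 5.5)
The plan is to show that an open smooth domain equals the interior of its own closure, and that this interior depends only on the measure class of the set.

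\emph{Step 1: regularity of $D$.} For an open set $D$ with smooth boundary, $D$ lies locally on one side of $\partial D$. So every boundary point $x\in\partial D$ has a coordinate neighbourhood $U$ in which $D\cap U=\{x_n<f(x')\}$ and $\partial D\cap U=\{x_n=f(x')\}$, with $f$ smooth. From this chart I read off two facts:
\begin{itemize}
\item every ball around $x$ meets $M\setminus\overline D$ in a set of positive measure;
\item $\partial D$ has measure zero, being a smooth hypersurface.
\end{itemize}

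\emph{Step 2: the measure-theoretic interior.} Define
$$
D^\ast=\{x:\ \text{some neighbourhood $U$ of $x$ satisfies } |U\setminus D|=0\}.
$$
This set depends only on $D$ up to null sets. Now take a smooth domain $D$.
\begin{itemize}
\item Clearly $D\subset D^\ast$.
\item Suppose $x\notin \overline D$. Then a neighbourhood of $x$ misses $D$ entirely, so $|U\setminus D|=|U|>0$ for every small neighbourhood $U$. Hence $x\notin D^\ast$.
\item Suppose $x\in\partial D$. By Step 1, every neighbourhood $U$ of $x$ has $|U\setminus D|>0$. Hence $x\notin D^\ast$.
\end{itemize}
Together these give $D^\ast=D$.

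\emph{Step 3: conclusion.} Apply Step 2 to $D_1$ and $D_2$. Since $D_1$ and $D_2$ agree up to a null set, $D_1^\ast=D_2^\ast$, and therefore
$$
D_1=D_1^\ast=D_2^\ast=D_2.
$$

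The only nontrivial step is the first bullet of Step 1, which relies on the one-sided local graph description of a smooth boundary. This is exactly what excludes slits such as a ball minus a hypersurface, where the boundary would have $D$ on both sides. I would state this as the interpretation of ``domain with smooth boundary'', namely $D$ is the interior of a smooth manifold with boundary $\overline D$.
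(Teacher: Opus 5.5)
Your proof is correct and uses the same key fact as the paper's proof. Exterior points of a smooth domain trivially have neighbourhoods missing it, and at a boundary point the one-sided local chart forces every neighbourhood to meet the complement in positive measure. The paper argues directly by contradiction from a point $x\in D_1\setminus D_2$, while you package the same two cases into the measure-theoretic interior $D^\ast$; the difference is only organizational, and your observation that $\partial D$ is null is never actually used.
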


\begin{proof}
Suppose that $x\in D_1\setminus D_2$.  Since $D_1$ is open, a small geodesic ball $U$ centered at $x$ is contained in $D_1$.  If $x$ is an interior point of the complement of $D_2$, then $U\setminus D_2$ has positive volume for $U$ small.  If $x\in\partial D_2$, the smoothness of $\partial D_2$ gives local coordinates in which $D_2$ is a half-ball; hence $U\setminus D_2$ again has positive volume.  This contradicts $|D_1\setminus D_2|=0$.  Thus $D_1\subset D_2$.  Interchanging the roles of $D_1$ and $D_2$ gives the reverse inclusion.
\end{proof}

\section{The hyperbolic case}\label{sec3}

Throughout this section,
$$
        s(t)=\sinh t,
        \qquad
        w(t)=s(t)^{n-1}.
$$
About any point $p\in\HH^n$, geodesic polar coordinates identify $\HH^n\setminus\{p\}$ with $(0,\infty)\times\Sn^{n-1}$ and the metric is
\begin{equation}\label{eq:hyperbolic-polar-metric}
        ds_{\HH^n}^2=dt^2+s(t)^2d\sigma^2,
\end{equation}
where $d\sigma^2$ is the round metric on $\Sn^{n-1}$.  We write $\widetilde\nabla$ for the gradient on the unit round sphere $\Sn^{n-1}$.  The volume element is
$$
        dv=w(t)\,dt\,d\sigma.
$$

\subsection{The first eigenfunctions of a hyperbolic ball}

We shall use the following standard spectral fact for geodesic balls in hyperbolic space.

\begin{lemma}[First Neumann eigenspace of a hyperbolic ball]\label{lem:hyperbolic-ball-facts}
Let $B_R\subset\HH^n$ be a geodesic ball of radius $R$ and let
$$
        \lambda=\mu_1(B_R).
$$
The first nonzero Neumann eigenvalue of $B_R$ has multiplicity $n$.  Its eigenspace is spanned by functions
$$
        u_i(t,\xi)=f(t)\xi_i,
        \qquad i=1,\dots,n,
$$
where $\xi=(\xi_1,\dots,\xi_n)\in\Sn^{n-1}\subset\RR^n$, and where $f$ is the regular solution of
\begin{equation}\label{eq:hyperbolic-ball-ode}
        f''+(n-1)\coth t\,f'
        +\left(\lambda-\frac{n-1}{s(t)^2}\right)f=0,
        \qquad 0<t<R,
\end{equation}
with boundary conditions
\begin{equation}\label{eq:hyperbolic-ball-bc}
        f(0)=0,
        \qquad
        f'(R)=0.
\end{equation}
After multiplication by a constant, $f$ may be chosen so that
$$
        f(t)>0\qquad 0<t\le R.
$$
Moreover,
\begin{equation}\label{eq:hyperbolic-ball-rayleigh}
        \lambda
        =\frac{\displaystyle\int_0^R\left(f'(t)^2+(n-1)\frac{f(t)^2}{s(t)^2}\right)w(t)\,dt}
        {\displaystyle\int_0^R f(t)^2w(t)\,dt},
\end{equation}
and
\begin{equation}\label{eq:hyperbolic-f-prime-positive}
        f'(t)>0\qquad 0<t<R.
\end{equation}
\end{lemma}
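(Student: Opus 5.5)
The plan is to separate variables in geodesic polar coordinates and reduce everything to a one-dimensional Sturm--Liouville problem on $(0,R)$. In the coordinates \eqref{eq:hyperbolic-polar-metric} the Laplacian is
$$
\Delta u=\partial_t^2u+(n-1)\coth t\,\partial_tu+s(t)^{-2}\Delta_{\Sn^{n-1}}u .
$$
Expanding a Neumann eigenfunction of $B_R$ in spherical harmonics of degree $k$, with $-\Delta_{\Sn^{n-1}}Y=k(k+n-1)Y$, produces radial ODEs
$$
g''+(n-1)\coth t\,g'+\Bigl(\mu-\tfrac{k(k+n-2)}{s^2}\Bigr)g=0,\qquad g'(R)=0,
$$
with $g$ regular at $0$. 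For fixed $k$ let $\nu_k$ be the lowest eigenvalue of the $k$-th radial problem; for $k\ge1$ regularity forces $g(0)=0$. The degree-one harmonics are exactly the coordinate functions $\xi_i$, which span an $n$-dimensional space. So the whole statement reduces to showing $\mu_1(B_R)=\nu_1$: that is, $\nu_1<\nu_k$ for $k\ge2$, and $\nu_1$ is smaller than the second radial eigenvalue $\nu_0^{(2)}$ of the $k=0$ problem, since $\nu_0^{(1)}=0$ corresponds to constants.

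First, the claims about $f$ itself. By Sturm--Liouville theory on $(0,R)$ with weight $w$, the first eigenfunction of the $k=1$ problem has no zeros in $(0,R]$, so it can be normalized positive. Multiplying the ODE by $fw$ and integrating by parts on $(0,R)$ gives the Rayleigh quotient \eqref{eq:hyperbolic-ball-rayleigh}. The boundary terms vanish because $f'(R)=0$ and $f(t)f'(t)w(t)\to0$ as $t\to0$; the latter holds since $f\sim ct$ and $w\sim t^{n-1}$.

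The inequality $\nu_1<\nu_k$ for $k\ge2$ follows from monotonicity of the potential term $k(k+n-2)/s^2$ in $k$. Both problems carry the same Neumann condition at $R$, and the min-max Rayleigh quotient for $k=1$ is at most that for $k\ge2$ on the same test functions. Since the potential is strictly larger for $k\ge2$, the inequality is strict.

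The main obstacle is the comparison $\nu_1<\nu_0^{(2)}$, the classical step in Weinberger's and Ashbaugh--Benguria's argument. Let $g$ be the second radial eigenfunction for $k=0$. Differentiating its ODE shows that $h=g'$ satisfies
$$
h''+(n-1)\coth t\,h'+\Bigl(\nu_0^{(2)}-\tfrac{n-1}{s^2}\Bigr)h=0 .
$$
This uses $(\coth t)'=-1/s^2$, with Ricci curvature $-(n-1)$ entering with a favourable sign; indeed the equation for $h$ is exactly the $k=1$ equation. Moreover $h(0)=0$ and $h(R)=0$, and $h$ does not change sign on $(0,R)$, since $g$ has exactly one critical structure. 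Hence $\nu_0^{(2)}$ is a Dirichlet eigenvalue at $R$ of the $k=1$ operator. The Dirichlet eigenvalue strictly exceeds the Neumann one, so $\nu_1<\nu_0^{(2)}$.

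The multiplicity is then exactly $n$, because only $k=1$ and the lowest radial mode attain $\lambda$. I would first attempt this derivative trick. If the curvature sign were to fail, I would instead use $g'$ as a test function in the $k=1$ Rayleigh quotient.

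Finally, $f'>0$ on $(0,R)$ by a Sturm argument. Write the ODE as $(wf')'=w\bigl((n-1)/s^2-\lambda\bigr)f$. The right-hand side is positive for small $t$ and changes sign at most once, at the point where $s(t)^2=(n-1)/\lambda$. Since $wf'$ vanishes at $0$ and at $R$, it increases and then decreases. It therefore cannot vanish in the interior, so it stays positive on $(0,R)$.
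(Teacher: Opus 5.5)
Your proposal is correct and follows essentially the same route as the paper's appendix. It uses separation of variables, the Rayleigh-quotient comparison in the angular eigenvalue for the higher sectors, the derivative trick $h=g'$ for the comparison with the second radial mode, integration by parts for the Rayleigh identity, and the sign argument for $wf'$. Your ``Dirichlet eigenvalue exceeds Neumann'' framing is the paper's argument in different words (insert $h$ into the degree-one Rayleigh quotient and get strictness from the natural boundary condition $h'(R)=0$), so the claim that $h$ has one sign is not needed. Two minor corrections: the curvature remark is unnecessary, since $(\coth t)'=-1/\sinh^2 t$ holds exactly, and the degree-$k$ harmonic eigenvalue is $k(k+n-2)$, not $k(k+n-1)$.
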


\begin{proof}
See Appendix \ref{app:first-neumann-balls}.
\end{proof}

Define the Weinberger extension
\begin{equation}\label{eq:hyperbolic-F-def}
        F(t)=
        \begin{cases}
        f(t),&0\le t\le R,\\
        f(R),&t\ge R.
        \end{cases}
\end{equation}
Then $F$ is nondecreasing, $F'(t)=0$ for $t>R$, and $F(0)=0$.  The function $F(t)/s(t)$ is understood at $t=0$ by taking its finite limit.  The following monotonicity is the key radial result.

\begin{lemma}\label{lem:hyperbolic-F-over-sinh}
The function
$$
        t\longmapsto \frac{F(t)}{\sinh t}
$$
is nonincreasing on $(0,\infty)$, and it is strictly decreasing on $(R,\infty)$.
\end{lemma}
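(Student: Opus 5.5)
The plan is to reduce the claim to a sign condition on a Wronskian-type quantity and to get that sign from the ODE \eqref{eq:hyperbolic-ball-ode} by an integrating-factor identity.

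On $(R,\infty)$ there is nothing to do. There $F\equiv f(R)>0$ by Lemma \ref{lem:hyperbolic-ball-facts}, and $\sinh t$ is strictly increasing, so $F/\sinh$ is strictly decreasing. Since $F$ is continuous at $R$, it then suffices to show that $f/\sinh$ is nonincreasing on $(0,R]$.

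For this, set
$$
h(t)=\sinh t\,f'(t)-\cosh t\,f(t),
\qquad\text{so that}\qquad
\Bigl(\frac{f}{\sinh}\Bigr)'=\frac{h}{\sinh^2 t}.
$$
The goal is to show $h<0$ on $(0,R]$. Differentiating gives $h'=\sinh t\,(f''-f)$. Substitute $f''$ from \eqref{eq:hyperbolic-ball-ode}. Then rewrite the terms $-(n-1)\coth t\,f'+(n-1)f/\sinh^2 t$ in terms of $h$, using the identity
$$
\cosh t\sinh t\,f'-f=\cosh t\,h+\sinh^2 t\,f .
$$
A short computation should then give the first-order relation
$$
h'=-(n-1)\coth t\,h-(n+\lambda)\sinh t\,f,
$$
which is equivalent to
$$
\bigl(\sinh^{n-1}t\,h\bigr)'=-(n+\lambda)\,f\,\sinh^n t<0
\qquad\text{on }(0,R],
$$
using $f>0$ there.

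Next, integrate from $0$. Because $f$ is the regular solution with $f(0)=0$, we have $f=O(t)$ and $f'=O(1)$ near $0$, so $h$ is bounded. Since $n\ge2$, this gives $\sinh^{n-1}t\,h(t)\to0$ as $t\to0^+$. Integrating the identity above therefore yields $\sinh^{n-1}t\,h(t)<0$, hence $h<0$ on $(0,R]$. So $f/\sinh$ is strictly decreasing on $(0,R]$. Combined with the first paragraph and continuity at $R$, this gives the full statement. It also justifies the finite limit at $t=0$, namely $f'(0)$.

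The main obstacle is the algebra leading to the clean identity for $(\sinh^{n-1}h)'$, in particular checking that the $1/\sinh^2$ terms cancel exactly. I would verify this carefully, for instance by checking it against the explicit solution of the ODE in the case $n=2$ if needed. The boundary behaviour at $0$ is routine by comparison with the regular singular point.
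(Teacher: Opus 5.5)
Your proof is correct, and the identity you flagged as the main risk does hold. With $h=\sinh t\,f'-\cosh t\,f$ you get $h'=\sinh t\,(f''-f)$. Substituting the ODE and using $\cosh t\sinh t\,f'-f=\cosh t\,h+\sinh^2 t\,f$ gives exactly $h'=-(n-1)\coth t\,h-(n+\lambda)\sinh t\,f$, that is, $(\sinh^{n-1}t\,h)'=-(n+\lambda)f\sinh^n t$. Since $\sinh^{n-1}t\,h(t)\to0$ at the origin, integrating gives $h<0$ on $(0,R]$.

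The paper takes a different route. It works with $\gamma=f'-\coth t\,f=h/\sinh t$ and uses the endpoint information $\gamma(0^+)=0$ and $\gamma(R)=-\coth R\,f(R)<0$; the latter needs the Neumann condition $f'(R)=0$. It then argues by contradiction at an interior positive maximum $t_1$, where $\gamma'(t_1)=0$ combined with the ODE forces $\gamma(t_1)<0$.

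The two arguments rest on the same computation. Your first-order relation is equivalent to $\gamma'=-n\coth t\,\gamma-(n+\lambda)f$, and the paper evaluates this only at a critical point. Solving it with the integrating factor $\sinh^{n-1}t$ gains several things:
\begin{itemize}
\item strict decrease of $F/\sinh t$ on all of $(0,\infty)$, not just on $(R,\infty)$;
\item the explicit representation $\sinh^{n-1}t\,h(t)=-(n+\lambda)\int_0^t f\sinh^n$;
\item no use of the boundary behaviour at $R$, only $f>0$ on $(0,t]$.
\end{itemize}
The same device works on the sphere. There it gives $(\sin^{n-1}t\,(\sin t\,f'-\cos t\,f))'=-(\lambda-n)f\sin^n t$, which recovers Lemma~\ref{lem:sphere-F-plus-over-sin} directly from $\lambda\ge n$. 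The paper's pointwise maximum argument is more economical in that it only needs the ODE at one point. It yields only non-strict monotonicity on $(0,R]$, which is all the later ray estimates use.
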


\begin{proof}
For $t\ge R$, $F(t)=f(R)$, so $F(t)/\sinh t$ is strictly decreasing.  It remains to consider $0<t\le R$.
Set
$$
        \gamma(t)=f'(t)-\coth t\,f(t).
$$
The desired assertion on $(0,R]$ is equivalent to $\gamma(t)\le0$.  Since $f(t)=a_0t+O(t^3)$ near $0$ for some $a_0>0$, we have $\lim_{t\downarrow0}\gamma(t)=0$.  Also
$$
        \gamma(R)=f'(R)-\coth R\,f(R)=-\coth R\,f(R)<0.
$$
Suppose, for contradiction, that $\gamma(t_0)>0$ for some $t_0\in(0,R)$.  Then $\gamma$ attains a positive maximum at some $t_1\in(0,R)$, and hence
\begin{equation}\label{eq:hyperbolic-gamma-prime-zero}
        0=\gamma'(t_1)=f''(t_1)+\frac{f(t_1)}{s(t_1)^2}-\coth t_1\,f'(t_1).
\end{equation}
Using \eqref{eq:hyperbolic-ball-ode} at $t_1$ and eliminating $f''(t_1)$ by \eqref{eq:hyperbolic-gamma-prime-zero}, one obtains
$$
        f'(t_1)-\frac{f(t_1)}{\sinh t_1\cosh t_1}
        =-\frac{\lambda f(t_1)\sinh t_1}{n\cosh t_1}<0.
$$
Since
$$
        \coth t_1=\frac{\cosh t_1}{\sinh t_1}
        >\frac{1}{\sinh t_1\cosh t_1},
$$
this implies
$$
        f'(t_1)-\coth t_1\,f(t_1)<0,
$$
contradicting $\gamma(t_1)>0$.  Thus $\gamma\le0$, and the proof is complete.
\end{proof}

Introduce the radial constants
\begin{equation}\label{eq:hyperbolic-AQH-def}
        A=\int_0^R f(t)^2w(t)\,dt,
        \qquad
        Q=\int_0^R f'(t)^2w(t)\,dt,
        \qquad
        H=\int_0^R \frac{f(t)^2}{s(t)^2}w(t)\,dt.
\end{equation}
By \eqref{eq:hyperbolic-ball-rayleigh},
\begin{equation}\label{eq:hyperbolic-eigen-identity}
        Q+(n-1)H=\lambda A.
\end{equation}

\subsection{A hyperbolic Weinberger center}

\begin{lemma}\label{lem:hyperbolic-center}
There exists $p\in\HH^n$ such that, in geodesic polar coordinates centered at $p$,
\begin{equation}\label{eq:hyperbolic-center}
        \int_\Omega F(t)\xi\,dv=0\in T_p\HH^n.
\end{equation}
Equivalently, for every first spherical harmonic $\omega(\xi)=v\cdot\xi$,
$$
        \int_\Omega F(t)\omega(\xi)\,dv=0.
$$
\end{lemma}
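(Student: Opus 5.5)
We will prove Lemma \ref{lem:hyperbolic-center} by the standard Brouwer fixed-point/degree argument used by Weinberger and by Chavel and Ashbaugh--Benguria in the hyperbolic setting. The first step is to express the vector field intrinsically. For $p\in\HH^n$ and $x\ne p$, set $t=d(p,x)$ and $\xi=\exp_p^{-1}(x)/t\in T_p\HH^n$, and define
$$
        V(p)=\int_\Omega F\bigl(d(p,x)\bigr)\,\frac{\exp_p^{-1}(x)}{d(p,x)}\,dv(x)\in T_p\HH^n .
$$
Since $F(0)=0$ and $F$ is bounded and continuous, the integrand extends continuously by $0$ at $x=p$. Hence $V$ is a continuous tangent vector field on $\HH^n$ (dominated convergence, with $\Omega$ bounded). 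In the polar coordinates about $p$, the condition \eqref{eq:hyperbolic-center} is exactly $V(p)=0$. The equivalent formulation with first spherical harmonics follows by pairing $V(p)$ with an arbitrary $v\in T_p\HH^n$.

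The second step is to work in a global model, such as the Poincaré ball or the hyperboloid, so that $V$ becomes a continuous vector field on $\RR^n$. The key point is its behavior far from $\Omega$. Fix a large geodesic ball $B_\rho(o)$ with $\overline\Omega\subset B_{\rho_0}(o)$ and $\rho\gg\rho_0$.

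For $p$ on $\partial B_\rho(o)$, every $x\in\Omega$ lies within a small angle of the direction from $p$ toward $o$. Indeed, the geodesic triangle $p,o,x$ has $d(o,x)<\rho_0$ fixed and $d(p,o)=\rho\to\infty$, so the angle at $p$ tends to $0$ uniformly. Moreover $d(p,x)\ge\rho-\rho_0>R$, so $F(d(p,x))=f(R)>0$. Therefore
$$
        \langle V(p),\,-\partial_t|_{p}\rangle
        =f(R)\int_\Omega\cos\angle_p(o,x)\,dv>0,
$$
where $-\partial_t$ denotes the unit vector at $p$ pointing to $o$. In other words, $V$ points strictly inward on the sphere $\partial B_\rho(o)$.

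The third step is the topological conclusion. Transport $V$ to a map $W:\overline{B_\rho(o)}\to\RR^n$ by expressing it in normal coordinates at $o$; for instance, use parallel transport along radial geodesics from $o$, which preserves inner products with the radial direction. On the boundary sphere $W$ has negative radial component. Consequently $W$ is homotopic to $-\mathrm{id}$ through nonvanishing maps on the boundary, via the straight-line homotopy, which never vanishes because the radial component stays negative. Then $W$ has degree $(-1)^n\neq0$ and must vanish somewhere inside. Equivalently, apply Brouwer's fixed-point theorem to $y\mapsto y+\varepsilon W(y)$ for small $\varepsilon$. This yields the desired $p$ with $V(p)=0$.

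The main obstacle is the uniform angle estimate in the second step. Making it rigorous requires hyperbolic trigonometry, using the hyperbolic law of cosines or comparison with Euclidean triangles, to show that the angle at $p$ between $po$ and $px$ is at most some $\theta(\rho)\to0$ uniformly for $x\in\overline\Omega$. The continuity of $V$ near $x=p$, where the direction $\xi$ is undefined, also needs a short check, which the vanishing of $F(0)$ handles.
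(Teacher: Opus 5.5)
Your proof is correct, but it takes a different route from the paper. The paper argues variationally. It sets $\Phi(\rho)=\int_0^\rho F$ and minimizes $J(q)=\int_\Omega \Phi(d(q,x))\,dv_x$, which is proper because $\Phi$ grows linearly ($F\equiv f(R)>0$ beyond $R$). At a minimizer $p$, the first variation $\nabla_p d(p,x)=-\xi$, together with $\Phi(t)=O(t^2)$ at the exceptional point $x=p$, gives $dJ_p(v)=-\langle\int_\Omega F\xi\,dv,v\rangle=0$. In other words, your field $V$ is exactly $-\nabla J$, and the paper uses this gradient structure in place of degree theory.

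The two approaches trade different costs. Yours avoids differentiating $J$ and only needs continuity of $V$, but it requires the inward-pointing estimate and the degree/Brouwer step. The paper's needs no topology and no triangle geometry. It also carries over to the compact sphere in Lemma~\ref{lem:sphere-center}, with only the extra exceptional point $-p$ to check. Your boundary argument has no analogue there, and for odd $n$ a merely continuous tangent field on $\Sn^n$ need not vanish, so the gradient structure is what guarantees a zero.

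The step you flag as the main obstacle is easy and needs no limiting argument. Let $\theta=\angle_p(o,x)$. The hyperbolic law of cosines gives $\cosh d(o,x)=\cosh\rho\cosh d(p,x)-\sinh\rho\sinh d(p,x)\cos\theta$. If $\cos\theta\le 0$, this forces $\cosh d(o,x)\ge\cosh\rho$, contradicting $d(o,x)<\rho_0<\rho$. Acuteness of the angle at $p$ is all you need for $\langle V(p),-\partial_t\rangle>0$.
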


\begin{proof}
Let
$$
        \Phi(\rho)=\int_0^\rho F(\tau)\,d\tau,
        \qquad
        J(q)=\int_\Omega \Phi(d(q,x))\,dv_x.
$$
The function $J$ is continuous.  Indeed, $F$ is bounded on bounded intervals, $\Phi$ is locally Lipschitz, and $\Omega$ is bounded.  Moreover, $J$ is proper on $\HH^n$: since $\Omega$ is bounded, $d(q,x)\to\infty$ uniformly in $x\in\Omega$ as $q\to\infty$; and since $F(t)=f(R)>0$ for $t\ge R$, the primitive $\Phi(\rho)$ grows linearly as $\rho\to\infty$.  Hence $J$ has a minimizer $p$ by Hopf--Rinow.

For $x\ne p$, write $x=\exp_p(t\xi)$.  The first variation formula for distance gives
$$
        \nabla_p d(p,x)=-\xi.
$$
At the exceptional point $x=p$, the function $q\mapsto \Phi(d(q,p))$ has directional derivative zero at $q=p$, because $F(0)=0$ and $F(t)=O(t)$ as $t\downarrow0$, whence $\Phi(t)=O(t^2)$.  Fix a compact neighbourhood $U$ of $p$ and a number $D$ such that $d(q,x)\le D$ for all $q\in U$ and $x\in\Omega$.  For $x\ne p$, the directional derivative in a unit direction is bounded in absolute value by $\|F\|_{L^\infty([0,D])}$.  Since $\Omega$ has finite volume, dominated convergence justifies differentiation under the integral sign.  For every $v\in T_p\HH^n$,
$$
        0=dJ_p(v)=-\left\langle \int_\Omega F(t)\xi\,dv,\,v\right\rangle.
$$
This proves \eqref{eq:hyperbolic-center}.
\end{proof}

\subsection{Ray estimates in the hyperbolic case}

Let
\begin{equation}\label{eq:hyperbolic-Y-R-def}
        Y_R=\int_0^R w(t)\,dt.
\end{equation}
For a measurable $E\subset(0,\infty)$ put
$$
        \mathcal Y(E)=\int_Ew(t)\,dt.
$$

\begin{corollary}\label{cor:hyperbolic-ray-scalar}
Let $E\subset(0,\infty)$ be measurable with $\mathcal Y(E)<\infty$, and set $y_E=\mathcal Y(E)$.  Then
\begin{align}
        \int_E F(t)^2w(t)\,dt
        &\ge A+f(R)^2\,(y_E-Y_R),
        \label{eq:hyperbolic-mass-ray}\\
        \int_E \frac{F(t)^2}{s(t)^2}w(t)\,dt
        &\le H+\frac{f(R)^2}{s(R)^2}\,(y_E-Y_R),
        \label{eq:hyperbolic-angular-ray}\\
        \int_E F'(t)^2w(t)\,dt
        &\le Q.
        \label{eq:hyperbolic-radial-ray}
\end{align}
\end{corollary}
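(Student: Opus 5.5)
Each of the three estimates follows from Lemma \ref{lem:ray-rearrangement} with weight $w=s^{n-1}$ on $(0,\infty)$, where $\mathcal V(\infty)=\infty$. Hence $\tau(y_E)$ is well defined for every finite $y_E$. Write $\tau_E=\tau(y_E)$, so that $\int_0^{\tau_E}w\,dt=y_E$. The plan is to compare $\int_E$ with $\int_0^{\tau_E}$, and then compare $\int_0^{\tau_E}$ with the ball quantity over $(0,R)$. We handle $\tau_E\ge R$ and $\tau_E<R$ separately; equivalently, $y_E\ge Y_R$ and $y_E<Y_R$.

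\emph{Mass estimate \eqref{eq:hyperbolic-mass-ray}.} $F^2$ is nondecreasing because $F\ge0$ is nondecreasing. Lemma \ref{lem:ray-rearrangement} therefore gives $\int_E F^2w\ge\int_0^{\tau_E}F^2w$. It remains to show $\int_0^{\tau}F^2w\ge A+f(R)^2(\mathcal V(\tau)-Y_R)$ for every $\tau$.
\begin{itemize}
\item If $\tau\ge R$, this is an equality, since $F=f(R)$ on $(R,\tau)$.
\item If $\tau<R$, the claim is $\int_\tau^R f^2w\le f(R)^2\int_\tau^R w$. This holds because $f\le f(R)$ on $(0,R]$ by \eqref{eq:hyperbolic-f-prime-positive}.
\end{itemize}
Equivalently, the function $G(\tau)=\int_0^\tau F^2w-f(R)^2\mathcal V(\tau)$ has derivative $(F^2-f(R)^2)w\le0$, with equality for $\tau>R$. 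So $G(\tau)\ge G(R)$ for all $\tau$.

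\emph{Angular estimate \eqref{eq:hyperbolic-angular-ray}.} $F^2/s^2$ is nonincreasing by Lemma \ref{lem:hyperbolic-F-over-sinh}, so Lemma \ref{lem:ray-rearrangement} gives $\int_E F^2s^{-2}w\le\int_0^{\tau_E}F^2s^{-2}w$. The comparison with $H$ uses the same derivative argument. Set $\tilde G(\tau)=\int_0^\tau F^2s^{-2}w-\frac{f(R)^2}{s(R)^2}\mathcal V(\tau)$. Its derivative is $\bigl(F^2/s^2-f(R)^2/s(R)^2\bigr)w$. By monotonicity this is $\ge0$ for $\tau<R$ and $\le0$ for $\tau>R$. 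Thus $\tilde G$ attains its maximum at $\tau=R$, where $\tilde G(R)=H-\frac{f(R)^2}{s(R)^2}Y_R$. This gives \eqref{eq:hyperbolic-angular-ray}.

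\emph{Radial estimate \eqref{eq:hyperbolic-radial-ray}.} This needs no rearrangement. $F'=0$ on $(R,\infty)$ and $F'^2w\ge0$, so $\int_E F'^2w\le\int_0^\infty F'^2w=Q$.

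The main point to verify is the direction of monotonicity in the angular case; this is exactly what Lemma \ref{lem:hyperbolic-F-over-sinh} supplies. Everything else is bookkeeping of the two cases $\tau_E\gtrless R$. Endpoint cases such as $y_E=0$ are covered by the same formulas.
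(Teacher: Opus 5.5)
Your proof is correct and follows the paper's route. You compare $\int_E$ with $\int_0^{\tau(y_E)}$ via Lemma \ref{lem:ray-rearrangement}, then make a supporting-line comparison at $Y_R$, and bound the radial term trivially. The only difference is cosmetic: you check the supporting-line step through the sign of $G'(\tau)$ and $\tilde G'(\tau)$ in the variable $\tau$, whereas the paper states it as convexity of $\Psi$ and concavity of $\Theta$ in $y=\mathcal V(\tau)$, and these are equivalent.
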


\begin{proof}
Since $F$ is nondecreasing, $F^2$ is nondecreasing.  By Lemma \ref{lem:ray-rearrangement},
$$
        \int_EF^2w\,dt\ge \int_0^{\tau(y_E)}F^2w\,dt.
$$
The function
$$
        \Psi(y)=\int_0^{\tau(y)}F(t)^2w(t)\,dt
$$
is convex in $y$, because $\Psi'(y)=F(\tau(y))^2$ is nondecreasing.  Moreover $\Psi(Y_R)=A$ and $\Psi'(Y_R)=f(R)^2$.  Hence the tangent-line inequality for a convex function gives
$$
        \Psi(y_E)\ge A+f(R)^2(y_E-Y_R),
$$
which proves \eqref{eq:hyperbolic-mass-ray}.

By Lemma \ref{lem:hyperbolic-F-over-sinh}, the function $F^2/s^2$ is nonincreasing.  Lemma \ref{lem:ray-rearrangement} gives
$$
        \int_E\frac{F^2}{s^2}w\,dt
        \le \int_0^{\tau(y_E)}\frac{F^2}{s^2}w\,dt.
$$
Now
$$
        \Theta(y)=\int_0^{\tau(y)}\frac{F(t)^2}{s(t)^2}w(t)\,dt
$$
is concave in $y$, with $\Theta(Y_R)=H$ and $\Theta'(Y_R)=f(R)^2/s(R)^2$.  Therefore the tangent-line inequality for a concave function gives
$$
        \Theta(y_E)\le H+\frac{f(R)^2}{s(R)^2}(y_E-Y_R),
$$
which proves \eqref{eq:hyperbolic-angular-ray}.  Finally, $F'=f'$ on $(0,R)$ and $F'=0$ on $(R,\infty)$, so \eqref{eq:hyperbolic-radial-ray} follows from nonnegativity of $F'^2w$.
\end{proof}

\subsection{Proof in the hyperbolic case}

\begin{proposition}\label{prop:hyperbolic}
Let $\Omega\subset\HH^n$ be a bounded connected smooth domain, and let $B_R\subset\HH^n$ be a geodesic ball such that $|B_R|=|\Omega|$.  Then
\begin{equation}\label{eq:hyperbolic-conclusion}
        \sum_{j=1}^n\frac1{\mu_j(\Omega)}
        \ge
        \frac{n}{\mu_1(B_R)}.
\end{equation}
Equality holds if and only if $\Omega$ is a geodesic ball of radius $R$.
\end{proposition}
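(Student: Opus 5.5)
The plan is to transplant the first eigenspace of $B_R$ to $\Omega$, compare mass and stiffness ray by ray, and then invoke Lemma \ref{lem:matrix}. Let $p$ be the Weinberger center from Lemma \ref{lem:hyperbolic-center}, and use geodesic polar coordinates $(t,\xi)$ about $p$. Define
$$P_i(t,\xi)=F(t)\xi_i,\qquad i=1,\dots,n,$$
with $F$ the Weinberger extension \eqref{eq:hyperbolic-F-def}. Each $P_i$ is Lipschitz and has zero mean on $\Omega$ by \eqref{eq:hyperbolic-center}, so $P_i\in H^1_\perp(\Omega)$. The $P_i$ are linearly independent: a combination $v\cdot\xi\,F(t)$ with $v\neq 0$ vanishes only on a hyperplane section, since $F>0$ for $t>0$, and $\Omega$ is open. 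Hence Lemma \ref{lem:trace-ritz} gives $\sum_j 1/\mu_j(\Omega)\ge\tr(\mcK^{-1}\mcM)$.

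\textbf{Computing the matrices.} For each $\xi\in\Sn^{n-1}$ let $E_\xi=\{t>0:\exp_p(t\xi)\in\Omega\}$ and $y(\xi)=\mathcal Y(E_\xi)$. Using $\widetilde\nabla\xi_i\cdot\widetilde\nabla\xi_j=\delta_{ij}-\xi_i\xi_j$, we get
$$\mcM=\int_{\Sn^{n-1}}\Big(\int_{E_\xi}F^2w\,dt\Big)\xi\otimes\xi\,d\sigma,$$
$$\mcK=\int_{\Sn^{n-1}}\Big[\Big(\int_{E_\xi}F'^2w\,dt\Big)\xi\otimes\xi+\Big(\int_{E_\xi}\tfrac{F^2}{s^2}w\,dt\Big)(\Id-\xi\otimes\xi)\Big]d\sigma.$$

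\textbf{Loewner bounds.} Both $\xi\otimes\xi$ and $\Id-\xi\otimes\xi$ are positive semidefinite. We may therefore insert the scalar ray estimates of Corollary \ref{cor:hyperbolic-ray-scalar} pointwise in $\xi$ and integrate. Set
$$Z=\int_{\Sn^{n-1}}(y(\xi)-Y_R)\,\xi\otimes\xi\,d\sigma.$$
Then $\tr Z=\int(y-Y_R)\,d\sigma=|\Omega|-|B_R|=0$.

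Using $\int\xi\otimes\xi\,d\sigma=\frac{|\Sn^{n-1}|}{n}\Id$ and $\int(y-Y_R)\,d\sigma=0$, the mass estimate \eqref{eq:hyperbolic-mass-ray} gives
$$\mcM\succeq a\Id+f(R)^2Z,\qquad a=\frac{A|\Sn^{n-1}|}{n}.$$
Estimates \eqref{eq:hyperbolic-angular-ray} and \eqref{eq:hyperbolic-radial-ray} give
$$\mcK\preceq \frac{|\Sn^{n-1}|}{n}\bigl(Q+(n-1)H\bigr)\Id-\frac{f(R)^2}{s(R)^2}Z.$$
By \eqref{eq:hyperbolic-eigen-identity} this equals $\lambda a\Id-\eta Z$ with $\eta=f(R)^2/s(R)^2$. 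Lemma \ref{lem:matrix} then yields \eqref{eq:hyperbolic-conclusion}.

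\textbf{Equality.} If $\Omega$ is a ball, its eigenvalues give equality directly. Conversely, suppose equality holds. Lemma \ref{lem:matrix} gives $\mcM=a\Id$, hence $\tr\mcM=na$. This says that the $\sigma$-integral of the nonnegative defect
$$\int_{E_\xi}F^2w\,dt-A-f(R)^2\bigl(y(\xi)-Y_R\bigr)$$
vanishes, so the defect is zero for a.e. $\xi$. Consequently both the rearrangement step and the tangent-line step in the proof of \eqref{eq:hyperbolic-mass-ray} are equalities for a.e. $\xi$.

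The delicate point is extracting $E_\xi=(0,R)$ from these equalities; two cases arise.
\begin{itemize}[leftmargin=2em]
\item If $y(\xi)<Y_R$, then $\Psi$ is strictly convex there, because $F^2=f^2$ is strictly increasing on $(0,R)$ by \eqref{eq:hyperbolic-f-prime-positive}. So the tangent line is strict, a contradiction.
\item If $y(\xi)>Y_R$, then $\Psi$ is affine beyond $Y_R$, so the mass estimate gives no information. Here I would use $\mcK=\lambda a\Id$ instead. Its trace forces equality in \eqref{eq:hyperbolic-angular-ray} for a.e. $\xi$, and $\Theta$ is strictly concave for $y>Y_R$ by the strict monotonicity in Lemma \ref{lem:hyperbolic-F-over-sinh}. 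This is again a contradiction.
\end{itemize}
Thus $y(\xi)=Y_R$ for a.e. $\xi$. Equality in the rearrangement of Lemma \ref{lem:ray-rearrangement} for the strictly increasing weight $F^2$ on $(0,R)$ then forces $E_\xi=(0,R)$ up to a null set.

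Hence $\Omega=B_R(p)$ up to a null set, and Lemma \ref{lem:ae-domain-equality} gives $\Omega=B_R(p)$. I expect this equality bookkeeping, rather than the inequality itself, to be the main obstacle.
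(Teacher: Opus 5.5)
Your proof is correct. The inequality part matches the paper's argument step for step: Weinberger center, transplanted $P_i=F(t)\xi_i$, trace Ritz bound, ray estimates integrated against $\xi\otimes\xi$ and $\Id-\xi\otimes\xi$, then Lemma \ref{lem:matrix}.

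You diverge only in the equality case. The paper works on the stiffness side. Taking the trace of $\lambda a\Id-\mcK=\int[\alpha\,\xi\otimes\xi+\beta(\Id-\xi\otimes\xi)]\,d\sigma=0$ gives
$$\alpha(\xi)=\int_{(0,R)\setminus E_\xi}f'^2w\,dt=0\quad\text{for a.e. }\xi.$$
Since $f'>0$ on $(0,R)$, this forces $(0,R)\subset E_\xi$ up to null sets. Hence $y(\xi)\ge Y_R$, and the zero average of $y-Y_R$ gives $y=Y_R$ a.e.

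You work on the mass side using strict convexity of $\Psi$, which is equally valid. However, your second case is superfluous. Once the first case shows $y(\xi)\ge Y_R$ for a.e. $\xi$, the identity $\int_{\Sn^{n-1}}(y-Y_R)\,d\sigma=0$ (which you already proved) gives $y=Y_R$ a.e. So neither $\mcK=\lambda a\Id$ nor the strict decrease of $F/\sinh$ on $(R,\infty)$ is needed.

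Relatedly, your remark that the mass estimate ``gives no information'' when $y(\xi)>Y_R$ is not accurate. A direct computation shows the mass defect is exactly
$$\int_{E_\xi}F^2w\,dt-A-f(R)^2\bigl(y(\xi)-Y_R\bigr)=\int_{(0,R)\setminus E_\xi}\bigl(f(R)^2-f(t)^2\bigr)w(t)\,dt.$$
So its vanishing forces $(0,R)\subset E_\xi$ whatever the sign of $y-Y_R$; this is the mass-side counterpart of the paper's $\alpha=0$ argument. The two routes are therefore interchangeable. The paper uses $\mcK=\lambda a\Id$ together with $f'>0$. A streamlined version of yours uses $\mcM=a\Id$ together with the strict monotonicity of $f$, which rests on the same fact.
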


\begin{proof}
Put $\lambda=\mu_1(B_R)$ and let $f,F,A,Q,H$ be as above.  Choose the point $p\in\HH^n$ given by Lemma \ref{lem:hyperbolic-center}.  In polar coordinates centered at $p$, write $x=\exp_p(t\xi)$, $\xi\in\Sn^{n-1}$, and define
\begin{equation}\label{eq:hyperbolic-P-def}
        P_i(x)=F(t)\xi_i,
        \qquad i=1,\dots,n.
\end{equation}
At $t=0$ we set $P_i(p)=0$.  In normal coordinates $y=t\xi$ centered at $p$, the regularity of the degree-one radial solution gives $f(t)=a_0t+O(t^3)$, and hence $P_i(y)=a_0y_i+O(|y|^3)$ near $p$.  Away from $p$, the profile $F$ is piecewise smooth and is $C^1$ across $t=R$ because $f'(R)=0$.  Thus $P_i\in H^1(\Omega)$.  By the choice of $p$, each $P_i$ has zero mean over $\Omega$.

The functions $P_1,\dots,P_n$ are linearly independent on $\Omega$.  Indeed, suppose that $P_b:=\sum_i b_iP_i$ vanishes a.e. on $\Omega$ for some $b\in\RR^n$.  Since $P_b$ is continuous and $\Omega$ is open, $P_b$ vanishes everywhere on $\Omega$.  If $b\ne0$, then away from the single point $p$ its zero set is contained in the totally geodesic hypersurface determined by $b\cdot\xi=0$.  Such a hypersurface has empty interior in $\HH^n$ and cannot contain the nonempty open set $\Omega$.  Hence $b=0$.

Recall the definition of mass and stiffness matrices \eqref{eq:hyperbolic-MK-def}. By Lemma \ref{lem:trace-ritz},
\begin{equation}\label{eq:hyperbolic-ritz-main}
        \sum_{j=1}^n\frac1{\mu_j(\Omega)}
        \ge \tr(\mcK^{-1}\mcM).
\end{equation}

For $\xi\in\Sn^{n-1}$, let
$$
        E_\xi=\{t>0:\exp_p(t\xi)\in\Omega\}
$$
and
$$
        Y(\xi)=\int_{E_\xi}w(t)\,dt.
$$
By Fubini's theorem, $E_\xi$ is measurable and $Y(\xi)<\infty$ for a.e. $\xi$.  Since $|\Omega|=|B_R|$,
\begin{equation}\label{eq:hyperbolic-delta-zero}
        \int_{\Sn^{n-1}}(Y(\xi)-Y_R)\,d\sigma(\xi)=0.
\end{equation}
Set
\begin{equation}\label{eq:hyperbolic-Z-def}
        Z=\int_{\Sn^{n-1}}(Y(\xi)-Y_R)\,\xi\otimes\xi\,d\sigma(\xi).
\end{equation}
Then $Z$ is symmetric and, by \eqref{eq:hyperbolic-delta-zero},
\begin{equation}\label{eq:hyperbolic-Z-trace-zero}
        \tr Z=0.
\end{equation}

Let
\begin{equation}\label{eq:hyperbolic-aceta-def}
        a=\frac{|\Sn^{n-1}|}{n}A,
        \qquad
        c=f(R)^2,
        \qquad
        \eta=\frac{f(R)^2}{s(R)^2}.
\end{equation}
First,
$$
        \mcM
        =\int_{\Sn^{n-1}}
        \left(\int_{E_\xi}F(t)^2w(t)\,dt\right)
        \xi\otimes\xi\,d\sigma(\xi).
$$
By \eqref{eq:hyperbolic-mass-ray},
\begin{align*}
        \mcM
        &\succeq
        \int_{\Sn^{n-1}}
        \left(A+c(Y(\xi)-Y_R)\right)
        \xi\otimes\xi\,d\sigma(\xi)\\
        &=A\int_{\Sn^{n-1}}\xi\otimes\xi\,d\sigma+cZ
        =a\Id+cZ,
\end{align*}
where we used
\begin{equation}\label{eq:hyperbolic-sphere-id-1}
        \int_{\Sn^{n-1}}\xi\otimes\xi\,d\sigma=\frac{|\Sn^{n-1}|}{n}\Id.
\end{equation}

Second, since
$$
        \left\langle \widetilde\nabla \xi_i,\widetilde\nabla\xi_j\right\rangle_{\Sn^{n-1}}
        =\delta_{ij}-\xi_i\xi_j,
$$
the hyperbolic gradient decomposition gives
\begin{align*}
        \mcK
        &=\int_{\Sn^{n-1}}
        \left(\int_{E_\xi}F'(t)^2w(t)\,dt\right)
        \xi\otimes\xi\,d\sigma(\xi)\\
        &\quad+
        \int_{\Sn^{n-1}}
        \left(\int_{E_\xi}\frac{F(t)^2}{s(t)^2}w(t)\,dt\right)
        (\Id-\xi\otimes\xi)\,d\sigma(\xi).
\end{align*}
By \eqref{eq:hyperbolic-angular-ray} and \eqref{eq:hyperbolic-radial-ray},
\begin{align*}
        \mcK
        &\preceq
        \int_{\Sn^{n-1}}Q\,\xi\otimes\xi\,d\sigma\\
        &\quad+
        \int_{\Sn^{n-1}}
        \left(H+\eta(Y(\xi)-Y_R)\right)
        (\Id-\xi\otimes\xi)\,d\sigma.
\end{align*}
Using \eqref{eq:hyperbolic-delta-zero}, \eqref{eq:hyperbolic-Z-def}, and \eqref{eq:hyperbolic-sphere-id-1}, this becomes
\begin{align*}
        \mcK
        &\preceq
        \frac{|\Sn^{n-1}|}{n}Q\Id
        +\frac{(n-1)|\Sn^{n-1}|}{n}H\Id
        -\eta Z\\
        &=\frac{|\Sn^{n-1}|}{n}\bigl(Q+(n-1)H\bigr)\Id-\eta Z.
\end{align*}
By \eqref{eq:hyperbolic-eigen-identity},
\begin{equation}\label{eq:hyperbolic-K-bound-main}
        \mcK\preceq \lambda a\Id-\eta Z.
\end{equation}
Thus the hypotheses of Lemma \ref{lem:matrix} are satisfied.  Hence
\begin{equation}\label{eq:hyperbolic-trace-lower}
        \tr(\mcK^{-1}\mcM)\ge \frac{n}{\lambda}.
\end{equation}
Combining \eqref{eq:hyperbolic-ritz-main} and \eqref{eq:hyperbolic-trace-lower}, we obtain
$$
        \sum_{j=1}^n\frac1{\mu_j(\Omega)}
        \ge \frac{n}{\lambda}
        =\frac{n}{\mu_1(B_R)}.
$$
This proves the inequality.

If $\Omega$ is a geodesic ball of radius $R$, then
$$
        \mu_1(\Omega)=\cdots=\mu_n(\Omega)=\mu_1(B_R)=\lambda,
$$
so equality holds.

Conversely, suppose equality holds in \eqref{eq:hyperbolic-conclusion}.  Then equality holds in both inequalities
$$
        \sum_{j=1}^n\frac1{\mu_j(\Omega)}
        \ge \tr(\mcK^{-1}\mcM)
        \ge \frac{n}{\lambda}.
$$
In particular, equality holds in Lemma \ref{lem:matrix}.  By its equality statement,
\begin{equation}\label{eq:hyperbolic-equality-MKZ}
        Z=0,
        \qquad
        \mcM=a\Id,
        \qquad
        \mcK=\lambda a\Id.
\end{equation}
Since $Z=0$, the upper comparison \eqref{eq:hyperbolic-K-bound-main} reads $\mcK\preceq \lambda a\Id$; by \eqref{eq:hyperbolic-equality-MKZ} equality holds in this comparison.

For a.e. $\xi\in\Sn^{n-1}$, set
\begin{align*}
        q(\xi)&=\int_{E_\xi}F'(t)^2w(t)\,dt,\\
        h(\xi)&=\int_{E_\xi}\frac{F(t)^2}{s(t)^2}w(t)\,dt,\\
        \delta(\xi)&=Y(\xi)-Y_R.
\end{align*}
By Corollary \ref{cor:hyperbolic-ray-scalar},
$$
        \alpha(\xi):=Q-q(\xi)\ge0,
        \qquad
        \beta(\xi):=H+\eta\delta(\xi)-h(\xi)\ge0.
$$
A direct subtraction of the formula for $\mcK$ from its upper bound gives
\begin{equation}\label{eq:hyperbolic-K-gap-integral}
        \lambda a\Id-\mcK
        =\int_{\Sn^{n-1}}
        \left[\alpha(\xi)\,\xi\otimes\xi+\beta(\xi)(\Id-\xi\otimes\xi)\right]d\sigma(\xi).
\end{equation}
The left-hand side is zero.  The integrand in \eqref{eq:hyperbolic-K-gap-integral} is positive semidefinite.  Taking traces yields
$$
        0=\int_{\Sn^{n-1}}\left(\alpha(\xi)+(n-1)\beta(\xi)\right)d\sigma(\xi).
$$
Therefore
\begin{equation}\label{eq:hyperbolic-alpha-beta-zero}
        \alpha(\xi)=0,
        \qquad
        \beta(\xi)=0
        \quad\text{for a.e. }\xi.
\end{equation}
In particular, $q(\xi)=Q$ for a.e. $\xi$, that is,
\begin{equation}\label{eq:hyperbolic-q-equality}
        \int_{E_\xi}F'(t)^2w(t)\,dt
        =\int_0^R f'(t)^2w(t)\,dt
        \quad\text{for a.e. }\xi.
\end{equation}
Since $F'=f'$ on $(0,R)$, $F'=0$ on $(R,\infty)$, and $f'(t)>0$ for $0<t<R$, \eqref{eq:hyperbolic-q-equality} forces
\begin{equation}\label{eq:hyperbolic-slice-contains-ball}
        (0,R)\subset E_\xi
        \quad\text{up to }w(t)dt\text{-null sets, for a.e. }\xi.
\end{equation}
Since $w(t)>0$ for $t>0$, these are equivalently one-dimensional Lebesgue null sets.  Thus $Y(\xi)\ge Y_R$ for a.e. $\xi$.  But \eqref{eq:hyperbolic-delta-zero} says that the average of $Y(\xi)-Y_R$ is zero.  Therefore
$$
        Y(\xi)=Y_R
        \quad\text{for a.e. }\xi.
$$
Together with \eqref{eq:hyperbolic-slice-contains-ball}, this gives
$$
        E_\xi=(0,R)
        \quad\text{up to }w(t)dt\text{-null sets, for a.e. }\xi.
$$
Hence $\Omega$ agrees almost everywhere with the geodesic ball $B_R(p)$.

Finally, Lemma \ref{lem:ae-domain-equality} upgrades this almost-everywhere equality to equality as domains.  Thus $\Omega=B_R(p)$.
\end{proof}

\section{The spherical case}\label{sec4}

We realize $\Sn^n$ as the unit sphere in $\RR^{n+1}$.  For $E\subset\Sn^n$, write $-E=\{-x:x\in E\}$, where $x\mapsto -x$ is the antipodal map.

Throughout this section, $\Omega\subset\Sn^n$ is a connected smooth domain contained in an open hemisphere.  Let $B_R\subset\Sn^n$ be a geodesic ball such that $|B_R|=|\Omega|$.  Since $\Omega$ is contained in an open hemisphere, $0<R\le\pi/2$.  Put
$$
        \lambda=\mu_1(B_R),
        \qquad
        s(t)=\sin t,
        \qquad
        w(t)=s(t)^{n-1}.
$$
About any point $p\in\Sn^n$, geodesic polar coordinates identify $\Sn^n\setminus\{p,-p\}$ with $(0,\pi)\times\Sn^{n-1}$ and the metric is
\begin{equation}\label{eq:sphere-polar-metric}
        ds_{\Sn^n}^2=dt^2+s(t)^2d\sigma^2.
\end{equation}
The volume element is
$$
        dv=w(t)\,dt\,d\sigma.
$$

\subsection{The first eigenfunctions of a spherical ball}

Also, we shall use the following standard spectral fact for spherical caps of radius at most $\pi/2$.

\begin{lemma}[First Neumann eigenspace of a spherical cap]\label{lem:sphere-ball-facts}
For $0<R\le\pi/2$, the first nonzero Neumann eigenvalue of $B_R\subset\Sn^n$ has multiplicity $n$, and its eigenspace is spanned by functions
$$
        u_i(t,\xi)=f(t)\xi_i,
        \qquad i=1,\dots,n,
$$
where $\xi=(\xi_1,\dots,\xi_n)\in\Sn^{n-1}\subset\RR^n$, and where $f$ is the regular solution of
\begin{equation}\label{eq:sphere-ball-ode}
        f''+(n-1)\cot t\,f'
        +\left(\lambda-\frac{n-1}{s(t)^2}\right)f=0,
        \qquad 0<t<R,
\end{equation}
with boundary conditions
\begin{equation}\label{eq:sphere-ball-bc}
        f(0)=0,
        \qquad
        f'(R)=0.
\end{equation}
After multiplication by a constant, $f$ may be chosen so that
$$
        f(t)>0\qquad 0<t\le R.
$$
Moreover,
\begin{equation}\label{eq:sphere-ball-rayleigh}
        \lambda
        =\frac{\displaystyle\int_0^R\left(f'(t)^2+(n-1)\frac{f(t)^2}{s(t)^2}\right)w(t)\,dt}
        {\displaystyle\int_0^R f(t)^2w(t)\,dt},
\end{equation}
and
\begin{equation}\label{eq:sphere-f-prime-positive}
        f'(t)>0\qquad 0<t<R.
\end{equation}
\end{lemma}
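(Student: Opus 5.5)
We prove the lemma by separation of variables on the cap $B_R=B_R(p)$, $0<R\le\pi/2$. In polar coordinates $(t,\xi)$ the Laplacian is $\partial_t^2+(n-1)\cot t\,\partial_t+s(t)^{-2}\Delta_{\Sn^{n-1}}$. Expanding a Neumann eigenfunction in spherical harmonics $Y_k$ of degree $k$, with $\Delta_{\Sn^{n-1}}Y_k=-k(n+k-2)Y_k$, reduces the problem to the radial Sturm--Liouville problems
$$
        (w g')'-k(n+k-2)\frac{w}{s^2}g=-\mu\,w g,
        \qquad g \text{ regular at } 0,\quad g'(R)=0.
$$
Let $\nu_{k}$ be the smallest eigenvalue for degree $k$, excluding the constant mode when $k=0$. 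The following facts are standard for such singular Sturm--Liouville problems.

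\begin{itemize}
\item[(i)] The lowest eigenfunction for each $k$ is simple and can be taken positive on $(0,R]$.
\item[(ii)] $\nu_k$ is strictly increasing in $k$, because the potential term $k(n+k-2)/s^2$ increases and the Rayleigh quotient comparison is strict.
\end{itemize}

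By (ii), $\nu_1<\nu_k$ for every $k\ge2$. The degree-one space is $n$-dimensional and spanned by the $\xi_i$. Hence $\mu_1(B_R)=\min(\nu_1,\nu_{0,1})$, where $\nu_{0,1}$ is the second radial eigenvalue.

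The main obstacle is showing $\nu_1<\nu_{0,1}$. This is what gives multiplicity exactly $n$ and the stated form of the eigenfunctions, and it is the only place where $R\le\pi/2$ enters. Let $g_0$ be the second radial eigenfunction, with eigenvalue $\nu_{0,1}$. Put $h=g_0'$. Differentiating the radial equation and using $(\cot t)'=-1/s^2$ gives
$$
        h''+(n-1)\cot t\,h'+\Bigl(\nu_{0,1}-\frac{n-1}{s^2}\Bigr)h=0,
$$
so $h$ solves the $k=1$ equation with the same eigenvalue. Moreover $h(0)=h(R)=0$, so $h$ is the first Dirichlet eigenfunction of the $k=1$ operator on $(0,R)$, and that eigenvalue equals $\nu_{0,1}$. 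The positivity of $h$ holds because $g_0'$ has no interior zero: there is one sign change of $g_0$, and the ODE then forces monotonicity.

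It remains to show that the Neumann eigenvalue $\nu_1$ for $k=1$ is strictly below this Dirichlet eigenvalue. Take the positive $k=1$ Dirichlet eigenfunction $h$ as a trial function for the Neumann problem and perturb it near $t=R$ using a Neumann-compatible function. More cleanly, compare via the Wronskian of $h$ with the $k=1$ Neumann solution $f$: Sturm comparison shows that $f'$ vanishes before the first zero of $h$. The curvature enters here. Integrating the identity $(w(f'h-fh'))'=(\nu_{0,1}-\nu_1)wfh$ over $(0,R)$ gives
$$
        -w(R)f(R)h'(R)=(\nu_{0,1}-\nu_1)\int_0^R wfh\,dt.
$$
Since $h'(R)<0$, this yields the inequality, provided one first knows $f>0$ on $(0,R]$. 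That positivity follows from (i).

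Next we prove $f'>0$ on $(0,R)$. Let $f$ be the positive $k=1$ Neumann solution. Suppose $f'$ had an interior zero $t_0$, taken to be a local maximum with $f''(t_0)\le0$. The ODE at $t_0$ gives $f''(t_0)=-(\lambda-(n-1)/s(t_0)^2)f(t_0)$. So the only possible problem is when $\lambda<(n-1)/s(t_0)^2$. In that case we use the fact that $(n-1)/\sin^2$ is decreasing on $(0,\pi/2]$. On $[t_0,R]$ this makes $f$ strictly convex wherever $f>0$, which is incompatible with $f'(t_0)=f'(R)=0$ unless $f'\equiv0$. The restriction $R\le\pi/2$ is exactly what keeps $\sin$ increasing here.

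For the remaining case, where $\lambda\ge(n-1)/s^2$ somewhere, we use the Rayleigh characterization. Replacing $f$ on $[t_0,R]$ by the constant $f(t_0)$ gives a valid trial function. If $f$ decreases after $t_0$, this strictly lowers the $f'^2$ term, and since $f(t_0)/s$ is dominated appropriately, it lowers the quotient. This contradicts minimality.

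Finally, the Rayleigh identity for $\lambda$ is obtained by multiplying the ODE by $wf$ and integrating by parts. The boundary terms vanish because $f'(R)=0$ and $w(0)f(0)f'(0)=0$.
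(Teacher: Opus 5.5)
Your separation of variables, the ordering $\nu_1<\nu_k$ for $k\ge2$, and the proof that $\nu_1$ lies below the second radial eigenvalue are sound. The integrated Wronskian identity $-w(R)f(R)h'(R)=(\nu_{0,1}-\nu_1)\int_0^R wfh\,dt$, with $h=\pm g_0'>0$ on $(0,R)$, is a legitimate alternative to the paper's route. The paper instead uses $h=g_0'$ directly as a degree-one trial function, gets $\mathcal R_1[h]=\nu_{0,2}$, and rules out equality because a minimizer would satisfy the natural condition $h'(R)=0$. Your route needs two extra facts: the sign of $h$, which you supply, and $h'(R)\ne0$, which holds because $h(R)=h'(R)=0$ would force $h\equiv0$. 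Nothing in this step uses $R\le\pi/2$, so your remark that it is the only place the hypothesis enters is wrong. The hypothesis is needed for $f'>0$.

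The genuine gap is in your proof of $f'>0$. Let $t_0$ be the first zero of $f'$ in $(0,R)$. Then $f''(t_0)\le0$, and the ODE gives $\lambda\ge(n-1)/\sin^2t_0$. So the case $\lambda<(n-1)/\sin^2t_0$, which you call the only possible problem, is immediately contradictory. Your convexity argument for that case fails anyway. On $[t_0,R]$, monotonicity of $(n-1)/\sin^2$ only gives $(n-1)/\sin^2t\le(n-1)/\sin^2t_0$, which says nothing about the sign of $\lambda-(n-1)/\sin^2t$, and the first-order term $(n-1)\cot t\,f'$ is ignored. In the case that actually occurs, $\lambda\ge(n-1)/\sin^2t_0$, your truncation needs $f\le f(t_0)$ on $[t_0,R]$. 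You assume this (``if $f$ decreases after $t_0$'') but do not prove it. On the set where $f>f(t_0)$, both $f(t_0)^2-f^2$ and $(n-1)/s^2-\lambda$ are $\le0$. Hence the potential term contributes $\int(f(t_0)^2-f^2)\bigl((n-1)/s^2-\lambda\bigr)w\,dt\ge0$ to the change of the numerator minus $\lambda$ times the denominator. This competes with the gain $-\int_{t_0}^R f'^2w\,dt$, so the Rayleigh quotient need not drop below $\lambda$; ``dominated appropriately'' does not close this.

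The missing idea is to work with $m=wf'$. The ODE reads $m'=\bigl((n-1)/s^2-\lambda\bigr)wf$, with $m(0)=m(R)=0$. Since $\sin$ is strictly increasing on $(0,R)\subset(0,\pi/2]$, the bracket is strictly decreasing. In your setting this finishes at once: $\lambda\ge(n-1)/\sin^2t_0$ forces $m'<0$ on $(t_0,R)$. Hence $m(R)<m(t_0)=0$, contradicting $f'(R)=0$. The paper uses the same mechanism globally. The bracket tends to $+\infty$ at $0$ and must be negative at $R$, since otherwise $m$ would increase strictly from $m(0)=0$ and could not return to $m(R)=0$. So the bracket changes sign exactly once, $m$ first rises and then falls, and $m(0)=m(R)=0$ gives $m>0$ on $(0,R)$.
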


\begin{proof}
See Appendix \ref{app:first-neumann-balls}.
\end{proof}

We shall also use the elementary lower bound
\begin{equation}\label{eq:sphere-lambda-ge-n}
        \lambda\ge n,
\end{equation}
with equality when $R=\pi/2$.  Indeed, let $h(t)=\sin t$.  Then $h$ solves the same radial equation with eigenvalue $n$.  Written in Sturm--Liouville form,
$$
        (wf')'+\left(\lambda-\frac{n-1}{s^2}\right)wf=0,
        \qquad
        (wh')'+\left(n-\frac{n-1}{s^2}\right)wh=0.
$$
Subtracting the two equations after multiplication by $h$ and $f$, respectively, gives
$$
        \bigl(w(hf'-fh')\bigr)'+(\lambda-n)wfh=0.
$$
Integrating from $0$ to $R$ and using $f'(R)=0$, $h'(R)=\cos R$, yields
$$
        (\lambda-n)\int_0^R w(t)f(t)\sin t\,dt
        =w(R)f(R)\cos R\ge0.
$$
The integral on the left is strictly positive, because $w(t)>0$, $f(t)>0$, and $\sin t>0$ for $0<t<R$.  This proves \eqref{eq:sphere-lambda-ge-n}.  If $R<\pi/2$, then the right-hand side is strictly positive, so $\lambda>n$.  If $R=\pi/2$, the right-hand side is zero, hence $\lambda=n$; in this endpoint case the regular radial solution is proportional to $\sin t$.

Introduce the radial constants
\begin{equation}\label{eq:sphere-AQH-def}
        A=\int_0^R f(t)^2w(t)\,dt,
        \qquad
        Q=\int_0^R f'(t)^2w(t)\,dt,
        \qquad
        H=\int_0^R \frac{f(t)^2}{s(t)^2}w(t)\,dt.
\end{equation}
By \eqref{eq:sphere-ball-rayleigh},
\begin{equation}\label{eq:sphere-eigen-identity}
        Q+(n-1)H=\lambda A.
\end{equation}

Define the half-sphere Weinberger extension
\begin{equation}\label{eq:sphere-F-plus-def}
        F_+(t)=
        \begin{cases}
        f(t),&0\le t\le R,\\
        f(R),&R\le t\le \pi/2.
        \end{cases}
\end{equation}

\begin{lemma}\label{lem:sphere-F-plus-over-sin}
The function $F_+$ is nondecreasing on $[0,\pi/2]$, and
$$
        t\longmapsto \frac{F_+(t)}{\sin t}
$$
is nonincreasing on $(0,\pi/2]$.
\end{lemma}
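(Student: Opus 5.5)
Monotonicity of $F_+$ is immediate: on $[0,R]$ it equals $f$ with $f'>0$ by \eqref{eq:sphere-f-prime-positive}, and on $[R,\pi/2]$ it is the constant $f(R)$, with continuity at $t=R$. For the quotient I would split $(0,\pi/2]$ into $[R,\pi/2]$ and $(0,R]$. On $[R,\pi/2]$, $F_+(t)/\sin t=f(R)/\sin t$ is nonincreasing because $\sin t$ is increasing on $(0,\pi/2]$ and $f(R)>0$. (If $R=\pi/2$ this interval is trivial.) The two pieces agree at $t=R$, so it remains to treat $(0,R]$.

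On $(0,R]$ I will copy the maximum-principle argument of Lemma \ref{lem:hyperbolic-F-over-sinh}. Set $\gamma(t)=f'(t)-\cot t\,f(t)$. The claim is equivalent to $\gamma\le0$ on $(0,R]$, since $(f/\sin t)'=\gamma/\sin t$. Near $0$, $f(t)=a_0t+O(t^3)$, so $\gamma(t)\to0$. At the endpoint, $\gamma(R)=-\cot R\,f(R)\le0$, which is strictly negative if $R<\pi/2$ and zero if $R=\pi/2$. In the latter case $f$ is proportional to $\sin t$ by the discussion after \eqref{eq:sphere-lambda-ge-n}, so $\gamma\equiv0$ and the claim is trivial. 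I may therefore assume $R<\pi/2$.

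Suppose $\gamma$ has a positive value. Then it attains a positive interior maximum at some $t_1\in(0,R)$ with
$$0=\gamma'(t_1)=f''(t_1)+\frac{f(t_1)}{\sin^2t_1}-\cot t_1\,f'(t_1).$$
I eliminate $f''(t_1)$ using \eqref{eq:sphere-ball-ode}:
$$-n\cot t\,f'+\Bigl(\frac{n}{\sin^2 t}-\lambda\Bigr)f=0 \quad\text{at } t_1.$$
Dividing by $n\cot t_1>0$ (valid since $t_1<\pi/2$) gives
$$f'(t_1)-\frac{f(t_1)}{\sin t_1\cos t_1}=-\frac{\lambda f(t_1)\tan t_1}{n}<0.$$
In the spherical case $\frac{1}{\sin t\cos t}\ge\cot t$, since this reduces to $1\ge\cos^2t$. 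That inequality goes the wrong way for a direct comparison, so this is the step I expect to be the main obstacle.

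To handle it, I rewrite $\gamma(t_1)$ exactly:
$$\gamma(t_1)=f'(t_1)-\frac{f(t_1)}{\sin t_1\cos t_1}+f(t_1)\Bigl(\frac{1}{\sin t_1\cos t_1}-\cot t_1\Bigr)=f(t_1)\tan t_1\Bigl(1-\frac{\lambda}{n}\Bigr),$$
using $\frac{1}{\sin t\cos t}-\frac{\cos t}{\sin t}=\tan t$. By \eqref{eq:sphere-lambda-ge-n}, $\lambda\ge n$, so $\gamma(t_1)\le0$. This contradicts $\gamma(t_1)>0$, hence $\gamma\le0$ on $(0,R]$. This is exactly where the bound $\lambda\ge n$ proved earlier enters, and it replaces the automatic sign in the hyperbolic case.
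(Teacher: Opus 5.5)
Your proposal is correct and follows the paper's proof: it uses the same maximum-principle argument for $\gamma=f'-\cot t\,f$, and your identity $\gamma(t_1)=f(t_1)\tan t_1\,(1-\lambda/n)$ is just a rearrangement of the paper's $n\cot t_1\,\gamma(t_1)+(\lambda-n)f(t_1)=0$, closed off by $\lambda\ge n$. Your separate treatment of $R=\pi/2$ via $f\propto\sin t$ is valid but unnecessary, since the interior-maximum argument already works there with $\gamma(R)=0$.
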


\begin{proof}
The monotonicity of $F_+$ follows from \eqref{eq:sphere-f-prime-positive} and its definition.  On $[R,\pi/2]$, the quotient $F_+(t)/\sin t=f(R)/\sin t$ is nonincreasing.  It remains to consider $0<t<R$.

Set
$$
        \gamma(t)=f'(t)-\cot t\,f(t).
$$
The desired assertion on $(0,R)$ is equivalent to $\gamma(t)\le0$.  Since $f(t)=a_0t+O(t^3)$ near $0$ for some $a_0>0$, we have $\lim_{t\downarrow0}\gamma(t)=0$.  If $R<\pi/2$, then $\gamma(R)=-\cot R\,f(R)<0$; if $R=\pi/2$, then $\gamma(R)=0$.

Suppose, for contradiction, that $\gamma$ has a positive value.  Then it attains a positive maximum at some $t_1\in(0,R)$.  At this point $\gamma'(t_1)=0$.  Combining this identity with \eqref{eq:sphere-ball-ode} gives
$$
        n\cot t_1\,\gamma(t_1)+(\lambda-n)f(t_1)=0.
$$
Since $0<t_1<R\le\pi/2$, $\cot t_1>0$, and by \eqref{eq:sphere-lambda-ge-n} we have $\lambda\ge n$.  The last displayed identity is impossible when $\gamma(t_1)>0$.  Hence $\gamma\le0$, and the proof is complete.
\end{proof}

Finally define the antipodally even radial profile
\begin{equation}\label{eq:sphere-F-even-def}
        F(t)=F_+\bigl(\min\{t,\pi-t\}\bigr),
        \qquad 0\le t\le\pi.
\end{equation}
Then
\begin{equation}\label{eq:sphere-F-even}
        F(\pi-t)=F(t),
        \qquad
        F(0)=F(\pi)=0.
\end{equation}

\subsection{A spherical Weinberger center}

\begin{lemma}\label{lem:sphere-center}
There exists $p\in\Sn^n$ such that, in geodesic polar coordinates centered at $p$,
\begin{equation}\label{eq:sphere-center}
        \int_\Omega F(t)\xi\,dv=0\in T_p\Sn^n.
\end{equation}
Equivalently, for every first spherical harmonic $\omega(\xi)=v\cdot\xi$ on the equator of $p$,
$$
        \int_\Omega F(t)\omega(\xi)\,dv=0.
$$
\end{lemma}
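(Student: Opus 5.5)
The plan is to copy the variational argument of Lemma \ref{lem:hyperbolic-center}, using the antipodally even profile $F$ of \eqref{eq:sphere-F-even-def} in place of the hyperbolic Weinberger extension. The sphere is compact, so properness is automatic. The only new point is the second singular point of polar coordinates, the antipode $-p$.

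\emph{Step 1: the functional and its minimizer.} Set
$$
        \Phi(\rho)=\int_0^\rho F(\tau)\,d\tau,\qquad 0\le\rho\le\pi,
        \qquad
        J(q)=\int_\Omega\Phi(d(q,x))\,dv_x,\qquad q\in\Sn^n.
$$
The profile $F$ is continuous on $[0,\pi]$, since $F_+$ is continuous and the two halves agree at $t=\pi/2$. It is also bounded by $f(R)$. Hence $\Phi$ is $C^1$ on $[0,\pi]$ with $\Phi'=F$ and Lipschitz constant $f(R)$. The distance $d(\cdot,x)$ is $1$-Lipschitz, so $q\mapsto\Phi(d(q,x))$ is $f(R)$-Lipschitz uniformly in $x$. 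Therefore $J$ is Lipschitz on the compact manifold $\Sn^n$ and attains a minimum at some point $p$.

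\emph{Step 2: differentiating under the integral.} Fix $v\in T_p\Sn^n$ and a curve $q(\epsilon)$ with $q(0)=p$ and $q'(0)=v$.

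For $x\ne\pm p$, the distance function $q\mapsto d(q,x)$ is smooth near $p$. Writing $x=\exp_p(t\xi)$ with $0<t<\pi$, we have $\nabla_p d(p,x)=-\xi$, so
$$
        \frac{d}{d\epsilon}\Big|_{\epsilon=0}\Phi(d(q(\epsilon),x))=-F(t)\langle\xi,v\rangle.
$$
The exceptional set $\{p,-p\}$ is null. For completeness, the derivative vanishes there as well:
\begin{itemize}
\item At $x=p$, we have $F(t)=O(t)$, so $\Phi(t)=O(t^2)$.
\item At $x=-p$, we have $F(\pi)=0$ and $F(t)=O(\pi-t)$ by \eqref{eq:sphere-F-even}. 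Hence $\Phi(\pi)-\Phi(t)=O((\pi-t)^2)$, and the directional derivative of $q\mapsto\Phi(d(q,-p))$ at $q=p$ is zero.
\end{itemize}

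The difference quotients are bounded by $f(R)|v|$ times a constant, uniformly in $x$. Since $|\Omega|<\infty$, dominated convergence gives
$$
        0=dJ_p(v)=-\Big\langle\int_\Omega F(t)\xi\,dv,\;v\Big\rangle
$$
for every $v$, which is \eqref{eq:sphere-center}. The formulation with first spherical harmonics $\omega(\xi)=v\cdot\xi$ is the same statement read componentwise.

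\emph{Main obstacle.} The only delicate point is the behaviour of $\Phi\circ d$ at the cut point $-p$. In general $d(\cdot,-p)$ is not differentiable at $p$: its level sets are spheres around $-p$, which degenerate at $p$. The evenness $F(\pi-t)=F(t)$ together with $F(\pi)=0$ is exactly what makes $\Phi$ flat to second order at $\pi$, so no one-sided contribution appears there. In any case this point is a single null point of $\Omega$, and the uniform Lipschitz bound already suffices for dominated convergence. I therefore expect the argument to go through verbatim from the hyperbolic case, with compactness replacing the properness argument.
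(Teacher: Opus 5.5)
Your proposal is correct and follows the paper's proof essentially verbatim. Like the paper, you minimize $J(q)=\int_\Omega\Phi(d(q,x))\,dv_x$ over the compact sphere and use the first variation $\nabla_p d(p,x)=-\xi$ away from $\{p,-p\}$. You also handle the exceptional points through the second-order flatness of $\Phi$ at $0$ and $\pi$, and conclude by dominated convergence with the bound $\|F\|_\infty|v|$. Your explicit Lipschitz estimate for $J$, which the paper leaves implicit when it invokes compactness, and your remark that $\{p,-p\}$ is a null set anyway are harmless refinements.
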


\begin{proof}
Let
$$
        \Phi(\rho)=\int_0^\rho F(\tau)\,d\tau,
        \qquad
        J(q)=\int_\Omega \Phi(d(q,x))\,dv_x,
        \qquad q\in\Sn^n.
$$
Since $\Sn^n$ is compact, $J$ has a minimizer $p$.

Near the two exceptional radii one has
$$
        F(t)=O(t)\quad (t\downarrow0),
        \qquad
        F(t)=O(\pi-t)\quad (t\uparrow\pi),
$$
because $f(t)=a_0t+O(t^3)$ at the pole and $F(t)=F_+(\pi-t)$ near the antipode.  Consequently $\Phi(t)=O(t^2)$ as $t\downarrow0$ and $\Phi(\pi)-\Phi(t)=O((\pi-t)^2)$ as $t\uparrow\pi$.  Hence $q\mapsto\Phi(d(q,x))$ is differentiable at $q=p$ for the exceptional points $x=p$ and $x=-p$, with derivative equal to zero there.

For $x\ne p,-p$, the map $q\mapsto d(q,x)$ is smooth in a neighbourhood of $q=p$.  Writing $x=\exp_p(t\xi)$, the first variation formula for distance gives
$$
        \nabla_p d(p,x)=-\xi.
$$
For all nonexceptional $x$, the directional derivative in direction $v\in T_p\Sn^n$ is bounded in absolute value by $\|F\|_\infty |v|$.  Since the exceptional points have zero volume and $\Omega$ has finite measure, dominated convergence justifies differentiation under the integral sign.  For every $v\in T_p\Sn^n$,
$$
        0=dJ_p(v)=-\left\langle \int_\Omega F(t)\xi\,dv,\,v\right\rangle.
$$
This proves \eqref{eq:sphere-center}.
\end{proof}

\subsection{Antipodal folding and ray estimates}

Let $p$ be the center given by Lemma \ref{lem:sphere-center}.  For $x=\exp_p(t\xi)$ with $t\in(0,\pi)$, define
\begin{equation}\label{eq:sphere-P-def}
        P_i(x)=F(t)\xi_i,
        \qquad i=1,\dots,n,
\end{equation}
and set $P_i(p)=P_i(-p)=0$.  Near $p$, normal coordinates give $P_i(y)=a_0y_i+O(|y|^3)$.  Near $-p$, write $r=\pi-t$ and use $F(t)=O(r)$; the functions are again locally Lipschitz in normal coordinates at $-p$.  Away from $p$ and $-p$ the profile is piecewise smooth and is $C^1$ across the radii $R$, $\pi/2$, and $\pi-R$ because $f'(R)=0$ and the even reflection has zero derivative at $\pi/2$ from both sides.  Hence $P_i\in H^1(\Omega)$.  By Lemma \ref{lem:sphere-center}, each $P_i$ has zero mean over $\Omega$.

If $x=\exp_p(t\xi)$ and $t>\pi/2$, then
$$
        -x=\exp_p((\pi-t)(-\xi)).
$$
Using \eqref{eq:sphere-F-even}, we obtain
\begin{equation}\label{eq:sphere-P-odd}
        P_i(-x)=-P_i(x).
\end{equation}

\begin{lemma}[Antipodal folding]\label{lem:sphere-folding}
Let
$$
        \Omega_+=\{x\in\Omega:d(p,x)<\pi/2\},
        \qquad
        \Omega_-=\{x\in\Omega:d(p,x)>\pi/2\},
$$
where the equatorial slice $\Omega\cap\{d(p,x)=\pi/2\}$ is discarded.  Define
\begin{equation}\label{eq:sphere-folded-domain}
        \widehat\Omega=\Omega_+\cup(-\Omega_-)
        \subset B_{\pi/2}(p).
\end{equation}
Then the folding map, equal to the identity on $\Omega_+$ and to the antipodal map on $\Omega_-$, is injective and measure preserving up to null sets.  In particular,
\begin{equation}\label{eq:sphere-folded-volume}
        |\widehat\Omega|=|\Omega|=|B_R|.
\end{equation}
Moreover, if $G$ is any integrable density satisfying $G(-x)=G(x)$, then
$$
        \int_\Omega G\,dv=\int_{\widehat\Omega}G\,dv.
$$
Consequently,
\begin{equation}\label{eq:sphere-folded-integrals}
        \int_\Omega P_iP_j\,dv
        =\int_{\widehat\Omega}P_iP_j\,dv,
        \qquad
        \int_\Omega\langle\nabla P_i,\nabla P_j\rangle\,dv
        =\int_{\widehat\Omega}\langle\nabla P_i,\nabla P_j\rangle\,dv.
\end{equation}
\end{lemma}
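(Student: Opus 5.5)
The plan is to exploit the hemisphere hypothesis to show that the two pieces of $\widehat\Omega$ do not overlap, and then to transfer integrals using that the antipodal map is an isometry.

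\emph{Injectivity.} The folding map is injective on each of $\Omega_+$ and $\Omega_-$, since there it is either the identity or the antipodal map. It therefore suffices to show $\Omega_+\cap(-\Omega_-)=\emptyset$. Suppose $x\in\Omega_+$ and $-x\in\Omega_-$. Then both $x$ and $-x$ lie in $\Omega$. But $\Omega$ is contained in an open hemisphere $\{y:\langle y,e\rangle>0\}$, which never contains an antipodal pair, since $\langle x,e\rangle>0$ forces $\langle -x,e\rangle<0$. This is a contradiction. The inclusion $\widehat\Omega\subset B_{\pi/2}(p)$ is immediate, because $d(p,-x)=\pi-d(p,x)<\pi/2$ for $x\in\Omega_-$.

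\emph{Measure and integrals.} The discarded equatorial slice $\Omega\cap\{d(p,\cdot)=\pi/2\}$ lies in a great hypersphere, so it is a null set. Hence
\[
\int_\Omega G\,dv=\int_{\Omega_+}G\,dv+\int_{\Omega_-}G\,dv .
\]
The antipodal map $x\mapsto -x$ is an isometry of $\Sn^n$ and so preserves $dv$. The change of variables together with $G(-x)=G(x)$ gives
\[
\int_{\Omega_-}G\,dv=\int_{-\Omega_-}G\,dv .
\]
Adding this to the $\Omega_+$ term, and using that the union defining $\widehat\Omega$ is disjoint, yields $\int_\Omega G\,dv=\int_{\widehat\Omega}G\,dv$. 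Taking $G\equiv1$ gives $|\widehat\Omega|=|\Omega|=|B_R|$.

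\emph{Invariance of the densities.} It remains to check that $G=P_iP_j$ and $G=\langle\nabla P_i,\nabla P_j\rangle$ are antipodally even. For the mass density this is immediate from \eqref{eq:sphere-P-odd}: $P_iP_j(-x)=(-P_i(x))(-P_j(x))=P_iP_j(x)$. For the stiffness density, let $\iota(x)=-x$. Then \eqref{eq:sphere-P-odd} reads $P_i\circ\iota=-P_i$ a.e. Since $\iota$ is an isometry, differentiating gives
\[
d\iota\bigl(\nabla P_i(-x)\bigr)=-\nabla P_i(x)\quad\text{a.e.},
\]
and because $d\iota$ preserves inner products, $\langle\nabla P_i,\nabla P_j\rangle(-x)=\langle\nabla P_i,\nabla P_j\rangle(x)$.

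Integrability of both densities follows from $P_i\in H^1$ of a neighbourhood, as noted before the lemma. One should also observe that the oddness identity is used only on $\Omega\cup(-\Omega)$, where the $P_i$ are defined globally on $\Sn^n$. The functions are Lipschitz, so the a.e. chain rule applies.

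The only genuinely geometric step is the disjointness, which is exactly where the open-hemisphere hypothesis enters; the rest is routine change of variables.
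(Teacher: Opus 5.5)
Your proposal is correct and takes essentially the same route as the paper. The open-hemisphere hypothesis excludes antipodal pairs in $\Omega$, which gives injectivity (the disjointness of $\Omega_+$ and $-\Omega_-$); the equatorial slice is null; the isometric antipodal map transfers integrals of antipodally even densities; and $P_iP_j$ and $\langle\nabla P_i,\nabla P_j\rangle$ are even because each $P_i$ is odd. Your explicit check that $\widehat\Omega\subset B_{\pi/2}(p)$, and your remark that $G$ must be defined on $\Omega\cup(-\Omega)$, are small clarifications that the paper leaves implicit.
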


\begin{proof}
The equatorial slice is contained in a smooth hypersurface and has zero $n$-dimensional measure.  Since $\Omega$ is contained in an open hemisphere, it contains no pair of antipodal points.  Hence the folding map is injective after the equatorial null set is removed; otherwise two distinct points of $\Omega$ would be antipodal.  The identity and the antipodal map are isometries, so the folding map is measure preserving on the two pieces.  This proves \eqref{eq:sphere-folded-volume} and the integral identity for every antipodally invariant density $G$.  Finally, \eqref{eq:sphere-P-odd} implies that $P_iP_j$ is antipodally invariant.  Since the antipodal map is an isometry and $P_i\circ A=-P_i$ for the antipodal map $A$,  $\langle\nabla P_i,\nabla P_j\rangle$ is also antipodally invariant.  This proves \eqref{eq:sphere-folded-integrals}.
\end{proof}

For $\xi\in\Sn^{n-1}$, let
$$
        E_\xi=\{t\in(0,\pi/2):\exp_p(t\xi)\in\widehat\Omega\}
$$
and
$$
        Y(\xi)=\int_{E_\xi}w(t)\,dt,
        \qquad
        Y_R=\int_0^R w(t)\,dt.
$$
By Fubini's theorem, $E_\xi$ is measurable for a.e. $\xi$.  By \eqref{eq:sphere-folded-volume},
\begin{equation}\label{eq:sphere-delta-zero}
        \int_{\Sn^{n-1}}(Y(\xi)-Y_R)\,d\sigma(\xi)=0.
\end{equation}
Set
\begin{equation}\label{eq:sphere-Z-def}
        Z=\int_{\Sn^{n-1}}(Y(\xi)-Y_R)\,\xi\otimes\xi\,d\sigma(\xi).
\end{equation}
Then
\begin{equation}\label{eq:sphere-Z-trace-zero}
        \tr Z=0.
\end{equation}

\begin{corollary}\label{cor:sphere-ray-scalar}
Let $E\subset(0,\pi/2)$ be measurable and set $y_E=\int_Ew(t)\,dt$.  Then
\begin{align}
        \int_E F_+(t)^2w(t)\,dt
        &\ge A+f(R)^2\,(y_E-Y_R),
        \label{eq:sphere-mass-ray}\\
        \int_E \frac{F_+(t)^2}{s(t)^2}w(t)\,dt
        &\le H+\frac{f(R)^2}{s(R)^2}\,(y_E-Y_R),
        \label{eq:sphere-angular-ray}\\
        \int_E F_+'(t)^2w(t)\,dt
        &\le Q.
        \label{eq:sphere-radial-ray}
\end{align}
\end{corollary}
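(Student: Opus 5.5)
The plan is to mirror the proof of Corollary \ref{cor:hyperbolic-ray-scalar}, now on the interval $(0,L)$ with $L=\pi/2$ and weight $w(t)=\sin^{n-1}t$. Set $\mathcal V(t)=\int_0^t w$ and let $\tau$ be its inverse on $[0,\mathcal V(\pi/2)]$. Since $E\subset(0,\pi/2)$, we have $y_E\le\mathcal V(\pi/2)<\infty$, and $R\le\pi/2$ gives $Y_R=\mathcal V(R)\le\mathcal V(\pi/2)$. So Lemma \ref{lem:ray-rearrangement} applies, including its endpoint case $y_E=\mathcal V(\pi/2)$ with $\tau=\pi/2$. The two monotonicity inputs come from Lemma \ref{lem:sphere-F-plus-over-sin}: $F_+^2$ is nondecreasing on $(0,\pi/2)$, and $F_+^2/s^2$ is nonincreasing there, since $F_+/\sin t\ge0$ is nonincreasing. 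This is exactly why the construction works only after folding into the half-sphere.

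\emph{Mass bound.} Lemma \ref{lem:ray-rearrangement} gives $\int_EF_+^2w\ge\Psi(y_E)$, where $\Psi(y)=\int_0^{\tau(y)}F_+^2w\,dt$. Then $\Psi'(y)=F_+(\tau(y))^2$ is nondecreasing, so $\Psi$ is convex on $[0,\mathcal V(\pi/2)]$. Also $\Psi(Y_R)=A$ and $\Psi'(Y_R)=f(R)^2$, where at $Y_R$ we take the one-sided derivative if $R=\pi/2$. The tangent-line inequality then yields \eqref{eq:sphere-mass-ray}. The supporting-line inequality remains valid for a one-sided derivative at an endpoint, since $y_E\le Y_R$ in that case.

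\emph{Angular bound.} Define $\Theta(y)=\int_0^{\tau(y)}F_+^2s^{-2}w\,dt$. Its derivative $\Theta'(y)=F_+(\tau(y))^2/\sin^2\tau(y)$ is nonincreasing, so $\Theta$ is concave. Moreover $\Theta(Y_R)=H$ and $\Theta'(Y_R)=f(R)^2/s(R)^2$. Combining the nonincreasing case of Lemma \ref{lem:ray-rearrangement} with the tangent-line bound for concave functions gives \eqref{eq:sphere-angular-ray}. Integrability near $t=0$ is fine because $F_+/s$ has a finite limit $a_0$.

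\emph{Radial bound.} $F_+'=f'$ on $(0,R)$ and $F_+'=0$ on $(R,\pi/2)$. Hence $\int_EF_+'^2w\le\int_0^Rf'^2w=Q$, because the integrand is nonnegative. This is \eqref{eq:sphere-radial-ray}.

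The only point needing care is the endpoint bookkeeping: the bounded domain $(0,\pi/2)$, and the possibility $R=\pi/2$, where $Y_R$ is the endpoint and the derivatives of $\Psi$ and $\Theta$ are one-sided. I expect no real obstacle here, since all the nontrivial monotonicity is already contained in Lemma \ref{lem:sphere-F-plus-over-sin}.
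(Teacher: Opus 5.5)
Your proposal is correct and follows essentially the same route as the paper's proof. Both use Lemma \ref{lem:ray-rearrangement} on $(0,\pi/2)$ with the monotonicity from Lemma \ref{lem:sphere-F-plus-over-sin}, then tangent lines at $Y_R$ for the convex $\Psi$ and the concave $\Theta$, with left derivatives serving as supporting slopes when $R=\pi/2$ (valid since then $y_E\le Y_R$), and nonnegativity of $F_+'^{\,2}w$ for the radial bound.
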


\begin{proof}
This is the same tangent-line argument as in Corollary \ref{cor:hyperbolic-ray-scalar}, now on the finite interval $(0,\pi/2)$.  Namely, $F_+^2$ is nondecreasing, while $F_+^2/s^2$ is nonincreasing by Lemma \ref{lem:sphere-F-plus-over-sin}.  Applying Lemma \ref{lem:ray-rearrangement} and using the convexity of
$$
        \Psi(y)=\int_0^{\tau(y)}F_+(t)^2w(t)\,dt
$$
and the concavity of
$$
        \Theta(y)=\int_0^{\tau(y)}\frac{F_+(t)^2}{s(t)^2}w(t)\,dt
$$
applied at $y=y_E$ gives \eqref{eq:sphere-mass-ray} and \eqref{eq:sphere-angular-ray}.  When $R<\pi/2$, the supporting lines are the ordinary tangent lines at $Y_R$.  When $R=\pi/2$, $Y_R=\mathcal V(\pi/2)$ is the right endpoint of the interval of admissible $y$; for the convex function $\Psi$ and the concave function $\Theta$, the left derivatives at this endpoint determine the required supporting lines, so the same inequalities remain valid.  Finally, $F_+'=f'$ on $(0,R)$ and $F_+'=0$ on $(R,\pi/2)$, so \eqref{eq:sphere-radial-ray} follows from nonnegativity of $F_+'^{\,2}w$.
\end{proof}

\subsection{Proof in the spherical case}

\begin{proposition}\label{prop:spherical}
Let $\Omega\subset\Sn^n$ be a connected smooth domain contained in an open hemisphere, and let $B_R\subset\Sn^n$ be a geodesic ball such that $|B_R|=|\Omega|$.  Then
\begin{equation}\label{eq:spherical-conclusion}
        \sum_{j=1}^n\frac1{\mu_j(\Omega)}
        \ge
        \frac{n}{\mu_1(B_R)}.
\end{equation}
Equality holds if and only if $\Omega$ is a geodesic ball of radius $R$.
\end{proposition}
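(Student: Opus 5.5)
The plan is to mirror the proof of Proposition \ref{prop:hyperbolic}, with the folded domain $\widehat\Omega$ of Lemma \ref{lem:sphere-folding} standing in for $\Omega$ in every integral.

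\textbf{Trial functions and reduction to a trace bound.} Take $p$ from Lemma \ref{lem:sphere-center} and the functions $P_i$ from \eqref{eq:sphere-P-def}. These lie in $H^1(\Omega)$ and have zero mean over $\Omega$. They are linearly independent by the same argument as before: a nontrivial combination vanishes only on a great hypersphere through $p$, and such a set cannot contain the open set $\Omega$. Lemma \ref{lem:trace-ritz} then gives $\sum_j 1/\mu_j(\Omega)\ge\tr(\mcK^{-1}\mcM)$.

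\textbf{Loewner bounds via folding.} By \eqref{eq:sphere-folded-integrals}, the matrices $\mcM$ and $\mcK$ may be computed on $\widehat\Omega\subset B_{\pi/2}(p)$. There $F=F_+$, so in polar coordinates
\begin{align*}
\mcM&=\int_{\Sn^{n-1}}\Big(\int_{E_\xi}F_+^2w\,dt\Big)\xi\otimes\xi\,d\sigma,\\
\mcK&=\int_{\Sn^{n-1}}\Big(\int_{E_\xi}F_+'^2w\,dt\Big)\xi\otimes\xi\,d\sigma+\int_{\Sn^{n-1}}\Big(\int_{E_\xi}\frac{F_+^2}{s^2}w\,dt\Big)(\Id-\xi\otimes\xi)\,d\sigma.
\end{align*}
Set $a=|\Sn^{n-1}|A/n$, $c=f(R)^2$ and $\eta=f(R)^2/\sin^2R$, all positive since $R\le\pi/2$. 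Corollary \ref{cor:sphere-ray-scalar} together with \eqref{eq:hyperbolic-sphere-id-1}, \eqref{eq:sphere-delta-zero}, \eqref{eq:sphere-Z-def} and \eqref{eq:sphere-eigen-identity} gives, exactly as in the hyperbolic computation,
\[
\mcM\succeq a\Id+cZ,\qquad \mcK\preceq\lambda a\Id-\eta Z,\qquad \tr Z=0.
\]
Lemma \ref{lem:matrix} then yields $\tr(\mcK^{-1}\mcM)\ge n/\lambda$, which proves \eqref{eq:spherical-conclusion}.

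\textbf{Equality case.} If $\Omega$ is a ball of radius $R$, equality holds trivially. Conversely, if equality holds, Lemma \ref{lem:matrix} forces $Z=0$ and $\mcK=\lambda a\Id$. The trace of the positive semidefinite gap integrand, as in \eqref{eq:hyperbolic-K-gap-integral}, then shows $\int_{E_\xi}F_+'^2w\,dt=Q$ for a.e.\ $\xi$. Since $f'>0$ on $(0,R)$, this gives $(0,R)\subset E_\xi$ up to null sets. Combined with \eqref{eq:sphere-delta-zero}, we get $E_\xi=(0,R)$ a.e., so $\widehat\Omega=B_R(p)$ up to a null set.

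It remains to unfold, which I expect to be the main obstacle. We know $\Omega_+\cup(-\Omega_-)=B_R(p)$ a.e., with the two pieces disjoint. If $|\Omega_-|=0$, then $\Omega$ agrees a.e.\ with $B_R(p)$, and Lemma \ref{lem:ae-domain-equality} gives $\Omega=B_R(p)$. Otherwise, $\Omega$ agrees a.e.\ with $D=\Omega_+\cup\Omega_-$, where $\Omega_+\subset B_R(p)$ and $\Omega_-\subset -B_R(p)$, and both pieces have positive measure.

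When $R<\pi/2$, the closed caps $\overline{B_R(p)}$ and $\overline{B_R(-p)}$ are disjoint. A connected open $\Omega$ contained a.e.\ in their union must lie in one of them: otherwise $\Omega\setminus(\overline{B_R(p)}\cup\overline{B_R(-p)})$ is a nonempty open null set, which is impossible. Thus $\Omega$ is a.e.\ one cap, and we conclude by Lemma \ref{lem:ae-domain-equality}. If it is the cap about $-p$, that cap is still a geodesic ball of radius $R$.

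When $R=\pi/2$, the set $\Omega$ has the volume of a full hemisphere while lying inside an open hemisphere $H$. Then $|\overline H\setminus\Omega|=0$. Since $\Omega$ is a smooth domain inside $H$, it agrees a.e.\ with $H$, and Lemma \ref{lem:ae-domain-equality} gives $\Omega=H$, a ball of radius $\pi/2$.

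I would double-check one more point. In the $R<\pi/2$ case the only possibilities are $\Omega\subset B_R(p)$ or $\Omega\subset B_R(-p)$ a.e. The second means $\Omega_+$ is null, which is consistent. Either way $\Omega$ is a geodesic ball of radius $R$.
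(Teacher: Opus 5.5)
Your proposal is correct and follows essentially the same route as the paper's proof: the trace Ritz bound, the Loewner bounds $\mcM\succeq a\Id+cZ$ and $\mcK\preceq\lambda a\Id-\eta Z$ computed on the folded set via Corollary~\ref{cor:sphere-ray-scalar}, Lemma~\ref{lem:matrix}, and in the equality case $\widehat\Omega=B_R(p)$ a.e., followed by a connectedness argument for $R<\pi/2$ and the hemisphere-volume argument for $R=\pi/2$. Your unfolding step via the disjoint closed caps is a slightly cleaner version of the paper's positive-measure argument; your aside that both $\Omega_\pm$ have positive measure in the \emph{otherwise} case is unnecessary and not true in general ($\Omega_+$ may be null, as you later note), but nothing in your argument depends on it.
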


\begin{proof}
Let $p$ be the spherical center from Lemma \ref{lem:sphere-center}.  Define $P_i$ by \eqref{eq:sphere-P-def}.  These functions are linearly independent on $\Omega$.  Indeed, if $P_b:=\sum_i b_iP_i$ vanishes a.e. on $\Omega$, then by continuity it vanishes on the open set $\Omega$.  For $b\ne0$, the zero set of $P_b$ outside the two points $p$ and $-p$ is contained in the totally geodesic hypersurface determined by $b\cdot\xi=0$, and hence has empty interior.  Thus $b=0$.  Define $\mcM_{ij}$ and $\mcK_{ij}$ as in \eqref{eq:hyperbolic-MK-def}. By Lemma \ref{lem:trace-ritz},
\begin{equation}\label{eq:sphere-ritz-main}
        \sum_{j=1}^n\frac1{\mu_j(\Omega)}
        \ge \tr(\mcK^{-1}\mcM).
\end{equation}
By Lemma \ref{lem:sphere-folding}, specifically \eqref{eq:sphere-folded-integrals}, the matrices $\mcM$ and $\mcK$ may be computed over the folded set $\widehat\Omega\subset B_{\pi/2}(p)$.

Let
\begin{equation}\label{eq:sphere-aceta-def}
        a=\frac{|\Sn^{n-1}|}{n}A,
        \qquad
        c=f(R)^2,
        \qquad
        \eta=\frac{f(R)^2}{s(R)^2}.
\end{equation}
On $\widehat\Omega$ the profile is $F_+(t)$, so
$$
        \mcM
        =\int_{\Sn^{n-1}}
        \left(\int_{E_\xi}F_+(t)^2w(t)\,dt\right)
        \xi\otimes\xi\,d\sigma(\xi).
$$
By \eqref{eq:sphere-mass-ray},
\begin{align*}
        \mcM
        &\succeq
        \int_{\Sn^{n-1}}
        \left(A+c(Y(\xi)-Y_R)\right)
        \xi\otimes\xi\,d\sigma(\xi)\\
        &=A\int_{\Sn^{n-1}}\xi\otimes\xi\,d\sigma+cZ
        =a\Id+cZ.
\end{align*}

For the stiffness matrix, using the gradient decomposition in \eqref{eq:sphere-polar-metric},
\begin{align*}
        \mcK
        &=\int_{\Sn^{n-1}}
        \left(\int_{E_\xi}F_+'(t)^2w(t)\,dt\right)
        \xi\otimes\xi\,d\sigma(\xi)\\
        &\quad+
        \int_{\Sn^{n-1}}
        \left(\int_{E_\xi}\frac{F_+(t)^2}{s(t)^2}w(t)\,dt\right)
        (\Id-\xi\otimes\xi)\,d\sigma(\xi).
\end{align*}
By \eqref{eq:sphere-angular-ray} and \eqref{eq:sphere-radial-ray},
\begin{align*}
        \mcK
        &\preceq
        \int_{\Sn^{n-1}}Q\,\xi\otimes\xi\,d\sigma\\
        &\quad+
        \int_{\Sn^{n-1}}
        \left(H+\eta(Y(\xi)-Y_R)\right)
        (\Id-\xi\otimes\xi)\,d\sigma.
\end{align*}
Using \eqref{eq:sphere-delta-zero}, \eqref{eq:sphere-Z-def}, and
$$
        \int_{\Sn^{n-1}}\xi\otimes\xi\,d\sigma=\frac{|\Sn^{n-1}|}{n}\Id,
$$
we get
\begin{align*}
        \mcK
        &\preceq
        \frac{|\Sn^{n-1}|}{n}Q\Id
        +\frac{(n-1)|\Sn^{n-1}|}{n}H\Id
        -\eta Z\\
        &=\frac{|\Sn^{n-1}|}{n}\bigl(Q+(n-1)H\bigr)\Id-\eta Z.
\end{align*}
By \eqref{eq:sphere-eigen-identity},
\begin{equation}\label{eq:sphere-K-bound-main}
        \mcK\preceq \lambda a\Id-\eta Z.
\end{equation}
Thus the hypotheses of Lemma \ref{lem:matrix} are satisfied, and hence
\begin{equation}\label{eq:sphere-trace-lower}
        \tr(\mcK^{-1}\mcM)\ge \frac{n}{\lambda}.
\end{equation}
Together with \eqref{eq:sphere-ritz-main}, this gives
$$
        \sum_{j=1}^n\frac1{\mu_j(\Omega)}
        \ge \frac{n}{\lambda}
        =\frac{n}{\mu_1(B_R)}.
$$

If $\Omega$ is a geodesic ball of radius $R\le\pi/2$, then $\mu_1(\Omega)=\cdots=\mu_n(\Omega)=\lambda$, so equality holds.

Conversely, suppose equality holds in \eqref{eq:spherical-conclusion}.  Then equality holds in Lemma \ref{lem:matrix}, and therefore
\begin{equation}\label{eq:sphere-equality-MKZ}
        Z=0,
        \qquad
        \mcM=a\Id,
        \qquad
        \mcK=\lambda a\Id.
\end{equation}
In particular, equality holds in the stiffness comparison \eqref{eq:sphere-K-bound-main}.

For a.e. $\xi\in\Sn^{n-1}$, set
\begin{align*}
        q(\xi)&=\int_{E_\xi}F_+'(t)^2w(t)\,dt,\\
        h(\xi)&=\int_{E_\xi}\frac{F_+(t)^2}{s(t)^2}w(t)\,dt,\\
        \delta(\xi)&=Y(\xi)-Y_R.
\end{align*}
By Corollary \ref{cor:sphere-ray-scalar},
$$
        \alpha(\xi):=Q-q(\xi)\ge0,
        \qquad
        \beta(\xi):=H+\eta\delta(\xi)-h(\xi)\ge0.
$$
Subtracting the formula for $\mcK$ from its upper bound gives
$$
        \lambda a\Id-\mcK
        =\int_{\Sn^{n-1}}
        \left[\alpha(\xi)\,\xi\otimes\xi+\beta(\xi)(\Id-\xi\otimes\xi)\right]d\sigma(\xi).
$$
The left-hand side is zero, and the integrand is positive semidefinite.  Taking traces gives
$$
        0=\int_{\Sn^{n-1}}\left(\alpha(\xi)+(n-1)\beta(\xi)\right)d\sigma(\xi).
$$
Since $\alpha,\beta\ge0$, it follows that
$$
        \alpha(\xi)=0,
        \qquad
        \beta(\xi)=0
        \quad\text{for a.e. }\xi.
$$
In particular,
$$
        \int_{E_\xi}F_+'(t)^2w(t)\,dt
        =\int_0^R f'(t)^2w(t)\,dt
        \quad\text{for a.e. }\xi.
$$
Since $F_+'=f'$ on $(0,R)$, $F_+'=0$ on $(R,\pi/2)$, and $f'(t)>0$ for $0<t<R$, it follows that
$$
        (0,R)\subset E_\xi
        \quad\text{up to }w(t)dt\text{-null sets, for a.e. }\xi.
$$
Since $w(t)>0$ for $t\in(0,\pi/2)$, these are equivalently one-dimensional Lebesgue null sets.  Thus $Y(\xi)\ge Y_R$ for a.e. $\xi$.  By \eqref{eq:sphere-delta-zero}, the average of $Y(\xi)-Y_R$ is zero, hence $Y(\xi)=Y_R$ for a.e. $\xi$.  Therefore
$$
        E_\xi=(0,R)
        \quad\text{up to }w(t)dt\text{-null sets, for a.e. }\xi,
$$
and consequently
\begin{equation}\label{eq:sphere-folded-ball-ae}
        \widehat\Omega=B_R(p)
        \quad\text{up to a null set}.
\end{equation}

Assume first that $R<\pi/2$.  Then $B_R(p)$ and $B_R(-p)$ are disjoint open balls.  We claim that
$$
        \Omega\subset B_R(p)\cup B_R(-p).
$$
Indeed, assume that some $x\in\Omega$ is outside this union.  Since $\Omega$ is open, and since the boundaries of $B_R(p)$ and $B_R(-p)$ are smooth hypersurfaces, a small neighbourhood of $x$ inside $\Omega$ contains a subset $U$ of positive measure outside $B_R(p)\cup B_R(-p)$.  The folding map is the identity on the northern side and the antipodal map on the southern side, hence it is measure preserving on each side of the equator.  After discarding the equatorial null set, the folded image of $U$ has positive measure and lies outside $B_R(p)$, contradicting \eqref{eq:sphere-folded-ball-ae}.  Thus the displayed inclusion holds.

Because the two balls are disjoint open sets and $\Omega$ is connected, $\Omega$ is contained in exactly one of them.  If $\Omega\subset B_R(p)$, then \eqref{eq:sphere-folded-ball-ae} gives $|\Omega\triangle B_R(p)|=0$.  If $\Omega\subset B_R(-p)$, then applying the antipodal map to \eqref{eq:sphere-folded-ball-ae} gives $|\Omega\triangle B_R(-p)|=0$.  Lemma \ref{lem:ae-domain-equality} now gives equality as domains.  Hence $\Omega$ is a geodesic ball.

Assume next that $R=\pi/2$.  By hypothesis there is an open hemisphere $\mathcal H$ such that $\Omega\subset\mathcal H$.  Since $|\Omega|$ equals the volume of a hemisphere and $\Omega\subset\mathcal H$, we have $|\mathcal H\setminus\Omega|=0$, hence $|\mathcal H\triangle\Omega|=0$.  Lemma \ref{lem:ae-domain-equality}, applied to $\mathcal H$ and $\Omega$, gives $\Omega=\mathcal H$.  Thus $\Omega$ is a hemisphere, which is a geodesic ball of radius $\pi/2$.
\end{proof}

\begin{proof}[Proof of Theorem \ref{thm:main}]
The hyperbolic assertion is Proposition \ref{prop:hyperbolic}, and the spherical assertion is Proposition \ref{prop:spherical}.
\end{proof}

\appendix

\section{First Neumann eigenspaces of geodesic balls}
\label{app:first-neumann-balls}

We prove Lemmas \ref{lem:hyperbolic-ball-facts} and
\ref{lem:sphere-ball-facts}.  The proof is by separation of variables and
elementary Sturm--Liouville comparison.

Let
$$
        s(t)=
        \begin{cases}
        \sinh t, & \text{in } \HH^n,\\
        \sin t,  & \text{in } \Sn^n,
        \end{cases}
        \qquad
        c(t)=\frac{s'(t)}{s(t)},
        \qquad
        w(t)=s(t)^{n-1}.
$$
Thus \(c'(t)=-s(t)^{-2}\).  In the spherical case below we assume
\(0<R\le \pi/2\).  In geodesic polar coordinates on a ball \(B_R\),
$$
        ds^2=dt^2+s(t)^2d\sigma^2,
        \qquad
        dv=w(t)\,dt\,d\sigma,
$$
and
$$
        \Delta
        =
        \partial_t^2+(n-1)c(t)\partial_t
        +\frac1{s(t)^2}\Delta_{\Sn^{n-1}}.
$$

Let \(Y_{\ell,m}\), \(m=1,\dots,d_\ell\), be spherical harmonics of degree
\(\ell\) on \(\Sn^{n-1}\), normalized so that
$$
        -\Delta_{\Sn^{n-1}}Y_{\ell,m}
        =
        \alpha_\ell Y_{\ell,m},
        \qquad
        \alpha_\ell=\ell(\ell+n-2).
$$
In particular,
$$
        \alpha_0=0,
        \qquad
        \alpha_1=n-1,
        \qquad
        d_1=n.
$$

Separation of variables reduces the Neumann problem on \(B_R\) to the
one-dimensional problems
\begin{equation}
\label{eq:appendix-sector-equation}
        -(w\varphi')'
        +
        \frac{\alpha_\ell}{s^2}w\varphi
        =
        \nu w\varphi,
        \qquad
        \varphi'(R)=0,
\end{equation}
with the regularity condition at \(t=0\).  Equivalently,
$$
        \varphi''
        +(n-1)c\varphi'
        +
        \left(\nu-\frac{\alpha_\ell}{s^2}\right)\varphi
        =
        0.
$$
For fixed \(\ell\), write the corresponding eigenvalues as
$$
        \nu_{\ell,1}\le\nu_{\ell,2}\le\cdots .
$$
For \(\ell=0\), \(\nu_{0,1}=0\), with constant eigenfunction.  By the
standard one-dimensional Sturm--Liouville theory for this regular-singular
endpoint problem, the first eigenvalue in each angular sector is simple in
the radial variable, and its radial eigenfunction has no zero in \((0,R)\).
Moreover a regular eigenfunction in the \(\ell\)-th sector has expansion
$$
        \varphi(t)=a t^\ell+O(t^{\ell+2})
        \qquad (t\downarrow0).
$$

\begin{lemma}
\label{lem:appendix-sector-ordering-short}
For a hyperbolic ball of any radius \(R>0\), and for a spherical cap with
\(0<R\le\pi/2\),
$$
        \nu_{1,1}<\nu_{0,2},
        \qquad
        \nu_{1,1}<\nu_{\ell,1}
        \quad\text{for every }\ell\ge2.
$$
Hence the first nonzero Neumann eigenvalue of \(B_R\) is \(\nu_{1,1}\).
\end{lemma}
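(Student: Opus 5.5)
The plan is to prove the two inequalities separately by comparison arguments, using the variational characterization of the first eigenvalue in each sector. Write $E_\ell[\varphi]=\int_0^R\bigl(\varphi'^2+\alpha_\ell s^{-2}\varphi^2\bigr)w\,dt$ and $N[\varphi]=\int_0^R\varphi^2w\,dt$. Then
$$
        \nu_{\ell,1}=\min E_\ell[\varphi]/N[\varphi]
$$
over the admissible class. This class is $H^1$ with weight $w$, and for $\ell\ge1$ the finiteness of $\int s^{-2}\varphi^2w$ is also required. No boundary condition is imposed at $t=R$, because it is natural.

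\textbf{Higher sectors.} The inequality $\nu_{1,1}<\nu_{\ell,1}$ for $\ell\ge2$ is immediate. Let $\varphi$ be the positive first eigenfunction of sector $\ell$. Since $\alpha_\ell>\alpha_1$ and $\varphi\not\equiv0$, we have $E_\ell[\varphi]>E_1[\varphi]$. The function $\varphi\sim at^\ell$ is admissible for sector $1$, so
$$
        \nu_{\ell,1}=\frac{E_\ell[\varphi]}{N[\varphi]}>\frac{E_1[\varphi]}{N[\varphi]}\ge\nu_{1,1}.
$$

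\textbf{Radial sector, main step.} The inequality $\nu_{1,1}<\nu_{0,2}$ is the main obstacle. Let $g$ be the second radial eigenfunction, so that $-(wg')'=\nu wg$ with $\nu=\nu_{0,2}>0$, $g'(0)=g'(R)=0$, and $\int_0^R gw=0$. I will use the derivative $\varphi=g'$ as a test function in sector $1$.

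Differentiating the radial equation $g''+(n-1)cg'+\nu g=0$ gives
$$
        \varphi''+(n-1)c\varphi'+\Bigl(\nu-\frac{n-1}{s^2}\Bigr)\varphi=-(n-1)c'\varphi-\frac{n-1}{s^2}\varphi.
$$
Since $c'=-1/s^2$, the right-hand side is $0$. Hence $\varphi$ solves the $\ell=1$ equation with parameter $\nu$. It also satisfies $\varphi(0)=0$ and $\varphi(R)=0$, so it is a Dirichlet rather than a Neumann eigenfunction on $(0,R)$. Moreover $\varphi\not\equiv0$ and $\varphi$ is regular at $0$ ($\varphi\sim bt$).

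\textbf{Comparing the Dirichlet and Neumann problems.} Plugging $\varphi$ into the Rayleigh quotient gives $E_1[\varphi]=\nu N[\varphi]$, after integrating by parts with $\varphi(R)=0$ and $w\varphi\varphi'\to0$ at $0$. This yields $\nu_{1,1}\le\nu$.

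For strictness, suppose equality holds. Then $\varphi$ would be a minimizer, hence a Neumann eigenfunction with $\varphi'(R)=0$. Together with $\varphi(R)=0$, ODE uniqueness at the regular point $R$ forces $\varphi\equiv0$, a contradiction.

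Note that this argument uses neither the curvature sign nor $R\le\pi/2$. The only input is the identity $c'=-1/s^2$, which holds for $s=\sinh$ and for $s=\sin$ as long as $s>0$ on $(0,R]$.

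\textbf{Conclusion.} The Neumann spectrum of $B_R$ is the union of the $\nu_{\ell,k}$ over all sectors. The value $0=\nu_{0,1}$ is simple. Every other eigenvalue is either some $\nu_{0,k}$ with $k\ge2$, which is at least $\nu_{0,2}>\nu_{1,1}$, or some $\nu_{\ell,k}$ with $\ell\ge1$. Within sector $1$, $\nu_{1,k}\ge\nu_{1,1}$ with strict inequality for $k\ge2$ by simplicity. For $\ell\ge2$, $\nu_{\ell,k}\ge\nu_{\ell,1}>\nu_{1,1}$ by the higher-sector step. Hence $\nu_{1,1}$ is the smallest positive eigenvalue.

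The remaining technical care is in justifying that the admissible classes and the integrations by parts near the singular endpoint $t=0$ are legitimate. This follows from the stated expansions $\varphi=at^\ell+O(t^{\ell+2})$.
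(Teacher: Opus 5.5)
Your proposal is correct and follows the paper's proof essentially step by step. It uses the same Rayleigh-quotient comparison for $\ell\ge2$, the same test function $\varphi=g'$ (the paper's $h$, which solves the degree-one equation because $c'=-s^{-2}$ and vanishes at $0$ and $R$) to get $\nu_{1,1}\le\nu_{0,2}$, and the same strictness argument via the natural boundary condition $\varphi'(R)=0$ together with $\varphi(R)=0$ and ODE uniqueness. Your extra conclusion paragraph (the spectrum as the union of the sector spectra) and your remark that $R\le\pi/2$ is never used in this lemma are both accurate.
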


\begin{proof}
For the comparison with higher angular sectors, let \(\ell\ge2\).  Since
\(\alpha_\ell>\alpha_1=n-1\), for every admissible nonzero \(\varphi\),
$$
        \mathcal R_\ell[\varphi]
        =
        \mathcal R_1[\varphi]
        +
        (\alpha_\ell-\alpha_1)
        \frac{
        \displaystyle\int_0^R \frac{\varphi^2}{s^2}w\,dt
        }{
        \displaystyle\int_0^R \varphi^2w\,dt
        },
$$
where
$$
        \mathcal R_\ell[\varphi]
        =
        \frac{
        \displaystyle\int_0^R
        \left(\varphi'^2+\frac{\alpha_\ell}{s^2}\varphi^2\right)w\,dt
        }{
        \displaystyle\int_0^R \varphi^2w\,dt
        }.
$$
Taking \(\varphi\) to be the positive radial ground state in the
\(\ell\)-th sector gives
$$
        \nu_{\ell,1}
        =
        \mathcal R_\ell[\varphi]
        >
        \mathcal R_1[\varphi]
        \ge
        \nu_{1,1}.
$$

It remains to compare the degree-one sector with the radial sector.  Let
\(g\) be an eigenfunction for \(\nu_{0,2}\).  Then
$$
        g''+(n-1)cg'+\nu_{0,2}g=0,
        \qquad
        g'(0)=g'(R)=0.
$$
Set \(h=g'\).  Since \(g\) is nonconstant, \(h\not\equiv0\).  Differentiating
the equation for \(g\) and using \(c'=-s^{-2}\), we obtain
$$
        h''
        +(n-1)ch'
        +
        \left(\nu_{0,2}-\frac{n-1}{s^2}\right)h
        =
        0.
$$
Equivalently,
$$
        -(wh')'
        +
        \frac{n-1}{s^2}wh
        =
        \nu_{0,2}wh.
$$
Moreover \(h(R)=g'(R)=0\), and the regularity at the pole gives
\(h(t)=O(t)\).  Multiplying the last equation by \(h\), integrating by parts,
and using the vanishing of the boundary terms at \(0\) and \(R\), we get
$$
        \int_0^R
        \left(h'^2+\frac{n-1}{s^2}h^2\right)w\,dt
        =
        \nu_{0,2}
        \int_0^R h^2w\,dt.
$$
Therefore
$$
        \nu_{1,1}\le \mathcal R_1[h]=\nu_{0,2}.
$$

If equality held, then \(h\) would minimize the degree-one Rayleigh quotient.
Hence \(h\) would satisfy the natural Neumann boundary condition
$$
        h'(R)=0.
$$
Together with \(h(R)=0\), this gives zero Cauchy data for a nonsingular ODE
at \(t=R\).  Thus \(h\equiv0\), a contradiction.  Hence
$$
        \nu_{1,1}<\nu_{0,2}.
$$
The first nonzero Neumann eigenvalue is therefore \(\nu_{1,1}\).
\end{proof}

\begin{lemma}
\label{lem:appendix-f-prime-positive-short}
Let \(f\) be the positive radial ground state in the degree-one sector:
$$
        f''
        +(n-1)cf'
        +
        \left(\nu_{1,1}-\frac{n-1}{s^2}\right)f
        =
        0,
        \qquad
        f(0)=0,
        \qquad
        f'(R)=0.
$$
Then, in the hyperbolic case for every \(R>0\), and in the spherical case
for \(0<R\le\pi/2\),
$$
        f'(t)>0
        \qquad
        0<t<R.
$$
\end{lemma}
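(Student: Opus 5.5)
We must show $f'>0$ on $(0,R)$ for the positive degree-one ground state. The plan is a maximum-principle argument on the ODE, in two steps.

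\emph{Step 1: behaviour at the endpoints.} Since $f(t)=a_0t+O(t^3)$ with $a_0>0$ (positivity of $f$ near $0$), we have $f'>0$ on some interval $(0,\varepsilon)$. At the other end, $f'(R)=0$. Evaluating the ODE at $t=R$ gives
$$
f''(R)=-\Bigl(\nu_{1,1}-\frac{n-1}{s(R)^2}\Bigr)f(R).
$$
So the sign of $f''(R)$ depends on the sign of $\nu_{1,1}-(n-1)/s(R)^2$.

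\emph{Step 2: no critical point inside.} Suppose $f'$ vanishes somewhere in $(0,R)$, and let $t_0$ be the first zero. Then $f'>0$ on $(0,t_0)$ and $f'(t_0)=0$, so $f''(t_0)\le0$. The ODE at $t_0$ then forces $\nu_{1,1}\ge (n-1)/s(t_0)^2$. To rule this out, set $h=wf'$. A direct computation from $(wf')'=\bigl((n-1)/s^2-\nu\bigr)wf$ shows that $h'$ has the sign of $(n-1)/s^2-\nu$. The function $(n-1)/s(t)^2$ is strictly decreasing on $(0,R)$: this holds for $\sinh$ everywhere, and for $\sin$ because $R\le\pi/2$. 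Hence $(n-1)/s^2-\nu$ changes sign at most once, from $+$ to $-$, at some $t^*$. Therefore $h$ is increasing on $(0,t^*)$ and decreasing on $(t^*,R)$. Since $h(0^+)=0$ and $h(R)=0$, a function that rises from $0$ and then decreases back to $0$ is strictly positive in between. This gives $f'>0$ on $(0,R)$, provided $t^*<R$, i.e.\ $\nu_{1,1}>(n-1)/s(R)^2$.

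\emph{Main obstacle: the case $t^*\ge R$.} Then $h$ is increasing on all of $(0,R)$, yet $h(0^+)=0=h(R)$. This is impossible unless $h\equiv0$, which contradicts $f'>0$ near $0$. So the case $t^*\ge R$ cannot occur, and the monotonicity argument closes in all cases.

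The real care goes into $h(0^+)=0$. This follows from $w(t)=O(t^{n-1})$ and the boundedness of $f'$ near $0$.

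Strictness of the monotonicity of $h$ on each piece comes from $f>0$ on $(0,R]$. With this, $h>0$ strictly on $(0,R)$, and hence $f'>0$.
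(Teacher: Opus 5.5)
Your argument is correct and is essentially the paper's proof. The paper also sets $m=wf'$, uses that $m'=\bigl(\frac{n-1}{s^2}-\nu_{1,1}\bigr)wf$ changes sign exactly once because $s$ is strictly increasing on $(0,R)$, rules out the case $\frac{n-1}{s(R)^2}-\nu_{1,1}\ge 0$ via $m(0)=m(R)=0$, and concludes $m>0$ on $(0,R)$. Your Step 1 and the first-zero discussion at the start of Step 2 are not needed.
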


\begin{proof}
Put
$$
        \lambda=\nu_{1,1},
        \qquad
        m(t)=w(t)f'(t).
$$
The equation for \(f\) gives
$$
        m'(t)
        =
        \left(\frac{n-1}{s(t)^2}-\lambda\right)w(t)f(t).
$$
Also
$$
        m(0)=0,
        \qquad
        m(R)=0.
$$
Since \(f>0\) and \(w>0\) on \((0,R)\), the sign of \(m'\) is the sign of
$$
        \rho(t)=\frac{n-1}{s(t)^2}-\lambda.
$$
In the hyperbolic case \(s(t)=\sinh t\), and in the spherical case with
\(R\le\pi/2\), \(s(t)=\sin t\) is strictly increasing on \((0,R)\).  Hence
\(\rho\) is strictly decreasing.

We claim that \(\rho(R)<0\).  If \(\rho(R)\ge0\), then \(\rho(t)>0\) for
all \(0<t<R\).  Hence \(m'(t)>0\) on \((0,R)\), which contradicts
\(m(0)=m(R)=0\).  Thus \(\rho(R)<0\).  Since \(\rho(t)\to+\infty\) as
\(t\downarrow0\), there is a unique \(\tau\in(0,R)\) such that
\(\rho(\tau)=0\).  Therefore
$$
        m'>0 \text{ on }(0,\tau),
        \qquad
        m'<0 \text{ on }(\tau,R).
$$
It follows that \(m(t)>0\) on \((0,\tau]\).  For \(\tau<t<R\),
$$
        m(t)=-\int_t^R m'(r)\,dr>0.
$$
Thus \(m(t)>0\) for every \(0<t<R\).  Since \(w(t)>0\), we obtain
$$
        f'(t)=\frac{m(t)}{w(t)}>0
        \qquad
        0<t<R.
$$
\end{proof}

\begin{proof}[Proof of Lemmas \ref{lem:hyperbolic-ball-facts}
and \ref{lem:sphere-ball-facts}]
By Lemma \ref{lem:appendix-sector-ordering-short}, the first nonzero
Neumann eigenvalue of \(B_R\) is the first eigenvalue in the degree-one
sector:
$$
        \mu_1(B_R)=\nu_{1,1}.
$$
The radial ground state in this sector is simple, while the degree-one
spherical harmonics on \(\Sn^{n-1}\) have dimension \(n\) and are spanned by
the coordinate functions
$$
        \xi_1,\dots,\xi_n.
$$
Therefore the whole first nonzero Neumann eigenspace is
$$
        \spanop\{f(t)\xi_1,\dots,f(t)\xi_n\}.
$$
Thus the multiplicity is \(n\), and the eigenfunctions have the form
$$
        u_i(t,\xi)=f(t)\xi_i,
        \qquad i=1,\dots,n.
$$

In the hyperbolic case \(s(t)=\sinh t\) and \(c(t)=\coth t\), so the radial
equation is
$$
        f''+(n-1)\coth t\,f'
        +
        \left(\lambda-\frac{n-1}{\sinh^2t}\right)f=0.
$$
In the spherical case \(s(t)=\sin t\) and \(c(t)=\cot t\), so the radial
equation is
$$
        f''+(n-1)\cot t\,f'
        +
        \left(\lambda-\frac{n-1}{\sin^2t}\right)f=0.
$$
In both cases, regularity at the pole gives \(f(0)=0\), the Neumann boundary
condition gives \(f'(R)=0\), and the ground-state property gives
$$
        f(t)>0
        \qquad
        0<t\le R.
$$
By Lemma \ref{lem:appendix-f-prime-positive-short},
$$
        f'(t)>0
        \qquad
        0<t<R.
$$

Finally, multiplying the radial equation
$$
        (wf')'
        +
        \left(\lambda-\frac{n-1}{s^2}\right)wf
        =
        0
$$
by \(f\), integrating over \((0,R)\), and using the regularity at \(0\) and
the boundary condition \(f'(R)=0\), we obtain
$$
        \int_0^R
        \left(
        f'(t)^2+(n-1)\frac{f(t)^2}{s(t)^2}
        \right)w(t)\,dt
        =
        \lambda
        \int_0^R f(t)^2w(t)\,dt.
$$
Hence
$$
        \lambda
        =
        \frac{\displaystyle\int_0^R
        \left(
        f'(t)^2+(n-1)\frac{f(t)^2}{s(t)^2}
        \right)w(t)\,dt}
        {\displaystyle\int_0^R f(t)^2w(t)\,dt}.
$$
This proves both Lemma \ref{lem:hyperbolic-ball-facts} and Lemma
\ref{lem:sphere-ball-facts}.
\end{proof}


\begin{thebibliography}{99}

\bibitem{AshbaughOpen}
M. S. Ashbaugh,
\emph{Open problems on eigenvalues of the Laplacian},
in T. M. Rassias and H. M. Srivastava (eds.),
\emph{Analytic and Geometric Inequalities and Applications},
Math. Appl. \textbf{478}, Kluwer Academic Publishers, Dordrecht, 1999, pp. 13--28.

\bibitem{AB1993}
M. S. Ashbaugh and R. D. Benguria,
\emph{Universal bounds for the low eigenvalues of Neumann Laplacians in $N$ dimensions},
SIAM J. Math. Anal. \textbf{24} (1993), no. 3, 557--570.

\bibitem{AB1995}
M. S. Ashbaugh and R. D. Benguria,
\emph{Sharp upper bound to the first nonzero Neumann eigenvalue for bounded domains in spaces of constant curvature},
J. London Math. Soc. (2) \textbf{52} (1995), no. 2, 402--416.

\bibitem{BBC}
R. D. Benguria, B. Brandolini and F. Chiacchio,
\emph{A sharp estimate for Neumann eigenvalues of the Laplace--Beltrami operator for domains in a hemisphere},
Commun. Contemp. Math. \textbf{22} (2020), no. 3, 1950018, 9 pp.

\bibitem{BrascoPratelli}
L. Brasco and A. Pratelli,
\emph{Sharp stability of some spectral inequalities},
Geom. Funct. Anal. \textbf{22} (2012), no. 1, 107--135.

\bibitem{Chavel}
I. Chavel,
\emph{Eigenvalues in Riemannian Geometry},
Pure and Applied Mathematics, vol. 115, Academic Press, Orlando, FL, 1984.

\bibitem{HeLiTang}
Y. He, Y. Li and Q. Tang,
\emph{A proof of the Ashbaugh--Benguria conjecture for reciprocal sums of Neumann eigenvalues},
arXiv:2606.08271 [math.SP], 2026, 15 pp.

\bibitem{HileXu}
G. N. Hile and Z. Y. Xu,
\emph{Inequalities for sums of reciprocals of eigenvalues},
J. Math. Anal. Appl. \textbf{180} (1993), no. 2, 412--430.

\bibitem{Henrot}
A. Henrot,
\emph{Extremum Problems for Eigenvalues of Elliptic Operators},
Frontiers in Mathematics, Birkh\"auser, Basel, 2006.

\bibitem{MengWang}
Y. Meng and K. Wang,
\emph{Isoperimetric inequalities for Neumann eigenvalues on bounded domains in rank-1 symmetric spaces},
Calc. Var. Partial Differential Equations \textbf{63} (2024), no. 5, Paper No. 113, 12 pp.

\bibitem{Nadirashvili}
N. Nadirashvili,
\emph{Conformal maps and isoperimetric inequalities for eigenvalues of the Neumann problem},
in: \emph{Proceedings of the Ashkelon Workshop on Complex Function Theory (1996)},
Israel Math. Conf. Proc. \textbf{11}, Bar-Ilan Univ., Ramat Gan, 1997, pp. 197--201.

\bibitem{Szego}
G. Szeg\H{o},
\emph{Inequalities for certain eigenvalues of a membrane of given area},
J. Rational Mech. Anal. \textbf{3} (1954), 343--356.

\bibitem{Weinberger}
H. F. Weinberger,
\emph{An isoperimetric inequality for the $N$-dimensional free membrane problem},
J. Rational Mech. Anal. \textbf{5} (1956), no. 4, 633--636.

\bibitem{XiaWang}
C. Xia and Q. Wang,
\emph{On a conjecture of Ashbaugh and Benguria about lower eigenvalues of the Neumann Laplacian},
Math. Ann. \textbf{385} (2023), no. 1--2, 863--879.

\bibitem{Xu}
Y. Xu,
\emph{The first nonzero eigenvalue of Neumann problem on Riemannian manifolds},
J. Geom. Anal. \textbf{5} (1995), no. 1, 151--165.

\end{thebibliography}
\end{document}